\newtheorem{theorem}{Theorem}
\newtheorem{lemma}{Lemma}
\newtheorem{observation}{Observation}
\newtheorem{corollary}{Corollary}
\theoremstyle{definition}
\newtheorem{definition}{Definition}
\newcommand{\str}{\mathcal{S}}
\newcommand{\bigO}[1]{\mathcal{O}\left(#1\right)}
\newcommand{\tsc}[1]{\textsc{#1}}
\newcommand{\problem}[3]{{\centering\fbox{\pbox{\textwidth}{\tsc{#1}\\\textit{Instance}: #2\\\textit{Question}: #3}}}}
\newcommand{\inners}{1.2pt}
\newcommand{\outers}{1pt}
\newcommand{\gscale}{0.6}
\newcommand{\tdef}[1]{\textit{#1}}
\newclass{\Hard}{Hard}
\newclass{\Hness}{Hardness}
\newclass{\Complete}{Complete}
\newclass{\Cness}{Completeness}
\newcommand{\NPc}{\NP\text{-}\Complete}
\newfunc{\dist}{dist}
\newfunc{\girth}{girth}
\newfunc{\nd}{nd}
\newfunc{\YES}{YES}
\newfunc{\NOi}{NO}
\newfunc{\projax}{Proj}
\newfunc{\liftax}{Lift}
\newfunc{\ff}{ff}
\newfunc{\cf}{cf}
\renewcommand{\K}[1]{K_{\mathcal{#1}}}
\BODY\end{proof}}
\newcommand{\ceil}[1]{\left\lceil#1\right\rceil}
\newcommand{\floor}[1]{\left\lfloor#1\right\rfloor}
\author[1]{Guilherme C. M. Gomes}
\author[2]{Marina Groshaus}
\author[1]{Carlos V. G. C. Lima}
\author[1]{Vinicius F. dos Santos}
\affil[1]{DCC, Universidade Federal de Minas Gerais, Belo Horizonte, Brazil}
\affil[2]{DAINF, Universidade Tecnol\'ogica Federal do Paran\'a, Curitiba, Brazil}
\date{}
\title{Intersection graph of maximal stars}
\begin{document}

\maketitle

%\cortext[cor1]{Corresponding author}

% \begin{abstract}
%     A biclique of a graph $G$ is an induced complete bipartite subgraph of $G$ such that neither partition is empty.
%     The biclique graph of $G$ is the intersection graph of the maximal bicliques of $G$, and was first characterized by Groshaus and Szwarcfiter (2009).
%     A star is a biclique of $G$ such that one partition has exactly one vertex.
%     The star graph of $G$ is the intersection graph of the maximal stars of $G$.
%     In this work, we describe and give a Krausz-type characterization for star graphs.
%     We then proceed to show that every edge of a star graph with at least 3 vertices is in a triangle and that every star graph is biconnected.
%     Finally, we show a connection between the recognition of triangle-free star graphs and the recognition of squares of graphs, using it to prove that deciding if there is a graph of girth at most $r$ which has $G$ as its star graph is $\NP$-$\Complete$ if $r = 4$ and polynomial if $r \geq 6$.
% \end{abstract}
\begin{abstract}
    A biclique of a graph $G$ is an induced complete bipartite subgraph of $G$ such that neither part is empty.
    A star is a biclique of $G$ such that one part has exactly one vertex.
    The star graph of $G$ is the intersection graph of the maximal stars of $G$.
    A graph $H$ is star-critical if its star graph is different from the star graph of any of its proper induced subgraphs.
    We begin by presenting a bound on the size of star-critical pre-images by a quadratic function on the number of vertices of the star graph, then proceed to describe a Krausz-type characterization for this graph class; we combine these results to show membership of the recognition problem in \textsf{NP}.
    We also present some properties of star graphs. In particular, we show that they are biconnected, that every edge belongs to at least one triangle, characterize the structures the pre-image must have in order to generate degree two vertices, and bound the diameter of the star graph with respect to the diameter of its pre-image.
Finally, we prove a monotonicity theorem, which we apply to list every star graph on at most eight vertices.
\end{abstract}

%\keywords{Intersection Graphs, Stars, Star Graphs, Cliques, Edge Clique Covers}

\section{Introduction}

Intersection graphs form the basis for much of the theory on graph classes.
A chordal graph, for instance, is intersection graphs of all subtrees of some tree~\cite{classes_survey}.
Interval graphs, in turn, are defined as the family of intersection graphs of subpaths of a path.
Line graphs, are the intersection graphs of the edges of some graph.
Unlike chordal graphs, there are known characterizations for line graphs that make use of a finite family of forbidden induced subgraphs~\cite{line_nich}.
Moreover, line graphs were one of the first classes to be characterized in terms of edge clique covers that satisfy some properties pertinent to the intersection definition; results of this form are known as \textit{Krausz-type characterizations}.

All of the aforementioned classes are easily recognizable in polynomial time~\cite{classes_survey,line_naor}.
The complexity of recognising clique graphs -- the intersection graphs of the maximal cliques of some graph -- on the other hand, was left open for several years, with a very complicated argument, due to Alcón et al.~\cite{clique_recognition}, showing that the problem is $\NPc$.
Aside from the complexity point of view, many different properties of clique graphs have been investigated in the literature.
For instance, clique-critical graphs -- graphs whose clique graph is different from the clique graph of all of its proper induced subgraphs -- have different characterizations~\cite{clique_critical_toft} and bounds~\cite{clique_critical_alcon} which were crucial in the proof of the complexity of the recognition problem.
Another common line of investigation on intersection graphs is the behaviour of iterated applications of the operators.
For instance, Frías-Armenta et al.~\cite{clique_iterated}, and Larrión and Neumann-Lara~\cite{clique_divergent} study iterated applications of the clique operator.
Biclique graphs -- the intersection graph of the maximal induced complete bipartite graphs of a graph -- were first characterized and studied by Groshaus and Szwarcfiter~\cite{biclique_graph}.
Their results, however, are not very useful from the algorithmic point of view, and do not appear yield many insights on the recognition problem.
Nevertheless, they study the behaviour of biclique graphs, showing that every edge is contained in a diamond or in a 3-fan, and specialize their general characterization for biclique graphs of bipartite graphs.
As was done for clique graphs, the iterated biclique operator has also been studied by Groshaus et al. in multiple papers~\cite{biclique_iterated, almost_all_biclique}, with results ranging from characterizations of divergence, divergence type verification algorithms, and other structural results.

Regarding stars, previous work handled the intersection graphs of (not necessarily maximal) substars of a tree~\cite{substar_graph} and of a star~\cite{starlike_graph}.
For the first, a minimal infinite family of forbidden induced subgraphs was given, while, for the latter, a series of characterizations were shown, including a finite family of forbidden induced subgraphs.
Stars are a particular case of bicliques, and both the biclique graph and star graph coincide for $C_4$-free graphs.
In fact, this relationship was successfully applied to determine the complexity of biclique coloring (a coloring of the vertices of a graph such that no induced maximal biclique is monochromatic)~\cite{biclique_coloring_complexity}, using a reduction from \textsc{qsat}$_2$ to star coloring (a coloring of the vertices of a graph such that no induced maximal star is monochromatic).
To the best of our knowledge, these are the main topics discussed in the literature that involve maximal stars in some way.
However, star graphs appear to be natural generalizations of square graphs~\cite{murty} in the sense that
the squaring operation essentially picks the non-induced star centered at each vertex, and the intersection graph of these stars is generated.
On the other hand, for star graphs, every \textit{induced} maximal star is used in the construction of the intersection graph.
Despite the classes of star graphs and biclique graphs being equivalent when restricting the pre-image domain to $C_4$-free graphs, we were unable to deepen the study of biclique graphs; our efforts were hindered by some of the questions posed and developed upon in this work.

In this paper, we are concerned exclusively with induced maximal stars, as such, when mentioning stars, we will always mean induced stars, unless otherwise noted.
We begin by characterizing the class of intersection graphs of maximal stars of some graph, which we call star graphs and then, using a bound on the maximum number of vertices that a minimal pre-image may possess, we prove membership of the star graph recognition problem in \textsf{NP}.
We then proceed to show some properties of members of the class.
Specifically, we prove that all of them are biconnected, that every edge belongs to at least one triangle, characterize the necessary pre-image structures for the existence of degree two vertices, and a tight bound on the diameter of the obtained star graph.
Afterwards, we present two small graphs that show that square graphs and star graphs are not subclasses of each other; we also list all connected star graphs on four, five, and six vertices.
To achieve this, we present a monotonicity theorem which implies that a minimal pre-image $H$ has more induced maximal stars than any induced subgraph of $H$.
We conclude this work with some open problems and future research directions.
\section{Preliminaries}
\label{sec:prelim}

We use standard graph theory notation~\cite{murty}.
A graph is a \tdef{star} on $p+1$ vertices if it is a tree and has a vertex of degree $p$.
We denote this graph by $K_{1,p}$, and say that an induced subgraph $G'$ of $G$ is an \textit{induced star} if $G'$ is isomorphic to $K_{1,p}$, with $p = |V(G')| - 1 \geq 1$.
An induced star is \tdef{maximal} if there is no superset of its vertices that is also an induced star.
The \tdef{center} of a star $s$, denoted by $c(s)$, is the vertex of maximum degree $s$, all other vertices are its \tdef{leaves}; if the star is a single edge, we define the center arbitrarily.
For two vertices $u,v \in V(G)$, let $\dist_G(u,v)$ denote the length of the shortest path between $u$ and $v$ in $G$.
The \tdef{$k$-th power} $G^k$ of a graph $G$ is defined with $V(G^k) = V(G)$, $E(G^k) = \{uv \mid \dist_G(u,v) \leq k\}$.
When $k = 2$, we refer to $G$ as the \tdef{square} of $H$.
The \tdef{intersection graph} of a multifamily $\mathcal{F} \subseteq 2^S$, denoted by $G = \Omega(\mathcal{F})$ is the graph of order $|\mathcal{F}|$ and, for every $F_u, F_v \in \mathcal{F}$, $uv \in E(G) \Leftrightarrow F_u \cap F_v \neq \emptyset$.
An \tdef{edge clique cover} $\mathcal{Q} = \{Q_1, \dots, Q_n\}$ of a graph $G$ is a (multi)family of cliques of $G$ such that every edge of $G$ is contained in at least one element of $\mathcal{Q}$.

\begin{definition}\label{def:star_graph}
    Let $\mathcal{G}$ be the set of all finite graphs and $\str(H)$ be the set of all induced maximal stars of a graph $H$. The \tdef{star operator} is the function $\K{S} : \mathcal{G} \mapsto \mathcal{G}$ such that, $\K{S}(H) = \Omega(\str(H))$.
    If $G = \K{S}(H)$, we say that $H$ is a \tdef{pre-image} of $G$ and that $G$ is the \tdef{star graph} of $H$.
    The \textit{iterated star operator} $\K{S}^i$ is defined as $\K{S}^1(H) = \K{S}(H)$ and $\K{S}^i(H) = \K{S}(\K{S}^{i-1}(H))$.
\end{definition}

When detailing which vertices belong to a star, we shall describe it by $\{v_1\}\{v_2, \dots, v_{p+1}\}$, with $v_1$ being its center and the other $p$ vertices its leaves.
Unless noted, $G$ will be our star graph and $H$ the pre-image of $G$.%
%The family of all maximal stars of $G$ is denoted by $\str(G)$.
By Definition~\ref{def:star_graph}, two stars $s_a,s_b$ intersect if they share at least one vertex, with the possible cases being: (i) the centers of $s_a$ and $s_b$ coincide; (ii) the center of $s_a$ is a leaf of $s_b$; or (iii) $s_a$ and $s_b$ share at least one leaf.
Note that conditions (i) and (iii) may be simultaneously satisfied.
For an example of the intersection possibilities, please refer to Figure~\ref{fig:intersection_example}.

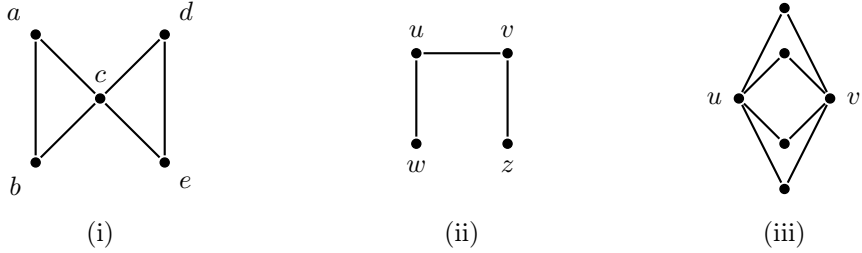
\begin{figure}[!tb]
    \centering
        \begin{tikzpicture}[scale=1.2]
            \GraphInit[unit=3,vstyle=Normal]
            \SetVertexNormal[Shape=circle, FillColor = black, MinSize=3pt]
            \tikzset{VertexStyle/.append style = {inner sep = \inners,outer sep = \outers}}
            \SetVertexLabelOut
            \begin{scope}[xshift=-3.5cm]
                \Vertex[x=0,y=0, Math, Lpos=90]{c}
                
                \Vertex[a=135, d=1, Math, Lpos=135]{a}
                \Vertex[a=225, d=1, Math, Lpos=225]{b}
                
                \Vertex[a=315, d=1, Math, Lpos=315]{e}
                \Vertex[a=45, d=1, Math, Lpos=45]{d}
                \node at (0,-1.5) {(i)};
                \Edges(b,c,d,e,c,a,b)
            \end{scope}
            \begin{scope}[xshift=-1, yshift=0.5cm]
                \Vertex[x=0,y=0, Math, Lpos=90, L={u}]{up}
                \Vertex[x=1,y=0, Math, Lpos=90, L={v}]{vp}
                
                \Vertex[x=0,y=-1, Math, Lpos=270]{w}
                \Vertex[x=1,y=-1, Math, Lpos=270]{z}
                \node at (0.5,-2) {(ii)};
                \Edges(w,up,vp,z)
            \end{scope}
            \begin{scope}[xshift=3.5cm]
                \Vertex[x=0,y=0, Math, Lpos=180]{u}
                \Vertex[x=1,y=0, Math, Lpos=0]{v}
                
                \Vertex[x=0.5,y=0.5, Math, NoLabel]{x_1}
                \Vertex[x=0.5,y=1, Math, NoLabel]{x_2}
                \Vertex[x=0.5,y=-0.5, Math, NoLabel]{x_3}
                \Vertex[x=0.5,y=-1, Math, NoLabel]{x_4}
                
                \node at (0.5,-1.5) {(iii)};
                \Edges(u,x_1,v,x_2,u,x_3,v,x_4,u)
            \end{scope}
        \end{tikzpicture}
        \caption{(i) The stars $\{c\}\{a,e\}$ and $\{c\}\{b,d\}$ intersect only at their center; (ii) the center of $\{u\}\{w,v\}$ is a leaf of star $\{v\}\{u,z\}$; (iii) the star centered at $u$ intersects the star centered at $v$ only at their leaves.\label{fig:intersection_example}}
\end{figure}

We say that star $s_a$ \textit{absorbs} star $s_b$ if, by removing one leaf of $s_b$, it becomes a substar of $s_a$.
A vertex $v$ is said to be \tdef{star-critical} if its removal changes the resulting star graph; that is, the star graph of $H$ and the star graph of $H \setminus \{v\}$ are not isomorphic.
Similarly to clique-critical graphs~\cite{clique_critical_toft,clique_critical_alcon}, a graph is \textit{star-critical} if all of its vertices are star-critical.
All vertices of Figure~\ref{fig:example_critical} are star-critical; in particular, the removal of $x$ does not cause the absorption of any star, but the intersection of two maximal induced stars is precisely $x$, i.e., there is an edge of the star graph that depends on $x$ to exist.
It is not hard to see that the only vertices which may be non-star-critical are simplicial vertices; for example, if there is a class of non-adjacent simplicial vertices that have the same neighborhood, all but one of them are certainly non-star-critical.
For the entirety of this work, we assume that all of our graphs are connected.

\begin{figure}[!tb]
    \centering
        \begin{tikzpicture}[scale=1.2]
            \GraphInit[unit=3,vstyle=Normal]
            \SetVertexNormal[Shape=circle, FillColor = black, MinSize=3pt]
            \tikzset{VertexStyle/.append style = {inner sep = \inners,outer sep = \outers}}
            \SetVertexLabelOut
            \begin{scope}
                \Vertex[x=-0.5,y=0, Math, Lpos=135]{a}
                \Vertex[x=-0.5,y=-1, Math, Lpos=225]{b}
                \Vertex[x=0.5,y=-1, Math, Lpos=315]{c}
                \Vertex[x=0.5,y=0, Math, Lpos=45]{d}
                \Vertex[x=0,y=0.5, Math, Lpos=90]{x}
                \Edges(a,b,c,a,d,b)
                \Edges(c,d,x,a)
            \end{scope}
        \end{tikzpicture}
        \caption{A star-critical graph. Vertex $x$ is star-critical as its removal would cause the stars $\{a\}\{b,x\}$ and $\{d\}\{c,x\}$ to not intersect.\label{fig:example_critical}}
\end{figure}
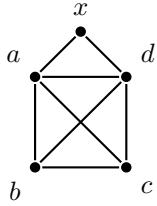

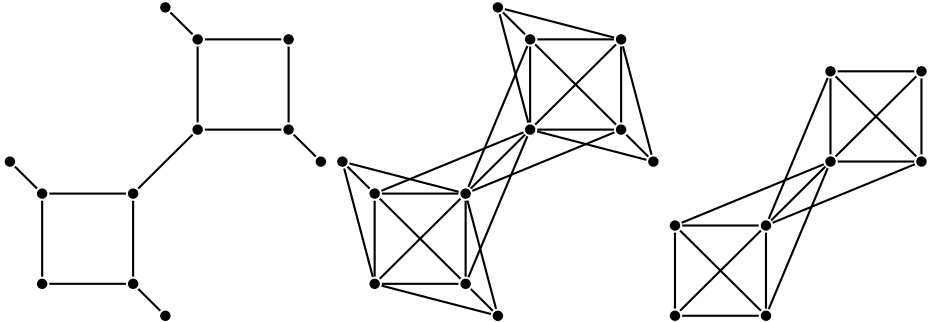
\begin{figure}[!htb]
    \centering
    
        \begin{tikzpicture}[rotate=45,scale=\gscale]
            
                \GraphInit[unit=3,vstyle=Normal]
                %\draw[help lines] (-5,-5) grid (5,5);
                \SetVertexNormal[Shape=circle, FillColor=black, MinSize=2pt]
                \tikzset{VertexStyle/.append style = {inner sep = \inners, outer sep = \outers}}
                \begin{scope}[shift={(-2.41cm, 0cm)}]
                    \SetVertexNoLabel
                    \grEmptyCycle[RA=1.41,prefix=a]{4}
                    \Edges(a0,a1,a2,a3,a0)
                    \Vertex[x=0,y=2.41]{1a}
                    %\Vertex[y=0,x=-2.41]{2a}
                    \Vertex[x=0,y=-2.41]{3a}
                    \Edge(1a)(a1)
                    %\Edge(2a)(a2)
                    \Edge(3a)(a3)
                \end{scope}
                \begin{scope}[shift={(2.41cm, 0cm)}]
                    \SetVertexNoLabel
                    \grEmptyCycle[RA=1.41,prefix=b]{4}
                    \Edges(b0,b1,b2,b3,b0)
                    \Vertex[x=0,y=2.41]{1b}
                    %\Vertex[y=0,x=2.41]{0b}
                    \Vertex[x=0,y=-2.41]{3b}
                    \Edge(1b)(b1)
                    %\Edge(0b)(b0)
                    \Edge(3b)(b3)
                \end{scope}
                \Edge(a0)(b2)
        \end{tikzpicture}
        \hfill
        \begin{tikzpicture}[rotate=45,scale=\gscale]
            
                \GraphInit[unit=3,vstyle=Normal]
                %\draw[help lines] (-5,-5) grid (5,5);
                \SetVertexNormal[Shape=circle, FillColor=black, MinSize=2pt]
                \tikzset{VertexStyle/.append style = {inner sep = \inners, outer sep = \outers}}
                \begin{scope}[shift={(-2.41cm, 0cm)}]
                    \SetVertexNoLabel
                    \grEmptyCycle[RA=1.41,prefix=a]{4}
                    \Edges(a0,a1,a2,a3,a0)
                    \Vertex[x=0,y=2.41]{1a}
                    %\Vertex[y=0,x=-2.41]{2a}
                    \Vertex[x=0,y=-2.41]{3a}
                    \Edge(1a)(a1)
                    %\Edge(2a)(a2)
                    \Edge(3a)(a3)
                    
                    \Edge(a0)(a2)
                    \Edge(a1)(a3)
                    
                    \Edge(1a)(a2)
                    \Edge(3a)(a2)
                    \Edge(1a)(a0)
                    \Edge(3a)(a0)
                    %\Edge(2a)(a1)
                    %\Edge(2a)(a3)
                \end{scope}
                \begin{scope}[shift={(2.41cm, 0cm)}]
                    \SetVertexNoLabel
                    \grEmptyCycle[RA=1.41,prefix=b]{4}
                    \Edges(b0,b1,b2,b3,b0)
                    \Vertex[x=0,y=2.41]{1b}
                    %\Vertex[y=0,x=2.41]{0b}
                    \Vertex[x=0,y=-2.41]{3b}
                    \Edge(1b)(b1)
                    %\Edge(0b)(b0)
                    \Edge(3b)(b3)
                    
                    \Edge(b0)(b2)
                    \Edge(b1)(b3)
                    
                    \Edge(1b)(b2)
                    \Edge(3b)(b2)
                    \Edge(1b)(b0)
                    \Edge(3b)(b0)
                    %\Edge(0b)(b1)
                    %\Edge(0b)(b3)
                \end{scope}
                \Edge(a0)(b2)
                \Edge(a0)(b1)
                \Edge(a0)(b3)
                \Edge(b2)(a1)
                \Edge(b2)(a3)
        \end{tikzpicture}
        \hfill
        \begin{tikzpicture}[shift={(1.41cm,0cm)},rotate=45,scale=\gscale]
            
                \GraphInit[unit=3,vstyle=Normal]
                %\draw[help lines] (-5,-5) grid (5,5);
                \SetVertexNormal[Shape=circle, FillColor=black, MinSize=2pt]
                \tikzset{VertexStyle/.append style = {inner sep = \inners, outer sep = \outers}}
                \begin{scope}[shift={(-2.41cm, 0cm)}]
                    \SetVertexNoLabel
                    \grEmptyCycle[RA=1.41,prefix=a]{4}
                    \Edges(a0,a1,a2,a3,a0)
                    \Edge(a0)(a2)
                    \Edge(a1)(a3)
                \end{scope}
                \begin{scope}[shift={(2.41cm, 0cm)}]
                    \SetVertexNoLabel
                    \grEmptyCycle[RA=1.41,prefix=b]{4}
                    \Edges(b0,b1,b2,b3,b0)
                    \Edge(b0)(b2)
                    \Edge(b1)(b3)
                \end{scope}
                \Edge(a0)(b2)
                \Edge(a0)(b1)
                \Edge(a0)(b3)
                \Edge(b2)(a1)
                \Edge(b2)(a3)
        \end{tikzpicture}
        \hfill

    \caption{A triangle-free graph (left), its square (center) and its star graph (right).}
    \label{fig:tri_star}
\end{figure}

Before proceeding to the main results of this work, we make the following remark, which immediately leads us to the property that every star graph of a triangle-free graph is closely related to the square of one of its induced subgraphs.

\begin{observation}
    Every vertex of degree at least two in a triangle free graph is the center of exactly one maximal star.
\end{observation}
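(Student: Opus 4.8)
The plan is to identify the unique maximal star centered at such a vertex explicitly: I claim it is $\{v\} \cup N(v)$, the vertex together with its entire neighborhood. The whole proof hinges on the single structural consequence of triangle-freeness, namely that $N(v)$ is an independent set. Indeed, if two neighbors $u_i, u_j$ of $v$ were adjacent, then $\{v, u_i, u_j\}$ would induce a triangle. Since $N(v)$ is independent, the set $\{v\} \cup N(v)$ induces $K_{1,d}$ with $d = \deg(v) \geq 2$, and because $d \geq 2 > 1$, the vertex $v$ is the unique vertex of maximum degree and is therefore unambiguously its center (we never fall into the arbitrary-center convention reserved for single edges).

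Next I would establish that this star is the only maximal one centered at $v$. Any induced star centered at $v$ has all of its leaves adjacent to $v$, so its leaf set is some subset $S \subseteq N(v)$. If $S$ were a proper subset, I would pick any $u \in N(v) \setminus S$; by triangle-freeness $S \cup \{u\}$ is still independent, so $\{v\} \cup S \cup \{u\}$ is a strictly larger induced star centered at $v$, contradicting maximality. Hence every maximal star centered at $v$ has leaf set exactly $N(v)$, giving both existence and uniqueness among the stars centered at $v$.

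It remains to verify that $\{v\} \cup N(v)$ is genuinely maximal as an induced star, and not merely maximal among stars that happen to be centered at $v$; this is the step I expect to be the crux. Suppose $\{v\} \cup N(v)$ were a proper subset of a larger induced star $T$ with center $c$. If $c = v$, then every vertex of $T$ other than $v$ is a leaf adjacent to $v$, forcing $T \subseteq \{v\} \cup N(v)$ and hence no proper extension. If $c \neq v$, then $v$ is a leaf of $T$ and so has degree exactly $1$ in $T$; but $T$ contains all of $N(v)$ with $|N(v)| = d \geq 2$, so $v$ has degree at least $2$ in $T$, a contradiction. This last case is precisely where the degree-$\geq 2$ hypothesis is essential: for a degree-one vertex the single-edge star could be absorbed into a larger star in which that vertex is a leaf, so the conclusion would fail without the hypothesis. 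Having ruled out both cases, $\{v\} \cup N(v)$ is the unique maximal star centered at $v$, which completes the argument.
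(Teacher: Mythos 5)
Your proof is correct and complete; the paper itself states this observation without proof, and your argument (triangle-freeness makes $N(v)$ independent, so $\{v\} \cup N(v)$ is the unique maximal star centered at $v$, with the degree hypothesis ruling out absorption as a leaf into a star with another center) is precisely the reasoning the authors leave implicit.
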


\begin{observation}
    If $H$ is a $K_3$-free graph with at least three vertices, where $D$ are its vertices of degree at least two, and $G = \K{S}(H)$, it holds that $G$ is isomorphic to $H[D]^2$.
\end{observation}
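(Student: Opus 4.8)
The plan is to produce an explicit isomorphism that identifies each vertex of $G$ (that is, each maximal star of $H$) with the center of that star, and then to check that adjacency is preserved in both directions. The first task is to determine precisely which stars of $H$ are maximal. By the preceding observation, every $v \in D$ is the center of exactly one maximal star; since $H$ is $K_3$-free its neighbourhood $N_H(v)$ is independent, so this star is necessarily $s_v = \{v\} \cup N_H(v)$. I would then argue that these exhaust the maximal stars: a star with a single leaf is an edge $uv$, and because $H$ is connected with at least three vertices, some endpoint --- say $u$ --- has a second neighbour $x$; as $v, x \in N_H(u)$ are non-adjacent by $K_3$-freeness, $\{u\}\{v,x\}$ is a strictly larger star, so no edge is maximal and every maximal star is centered at a vertex of $D$. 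Together with the uniqueness from the preceding observation, this gives a bijection $v \mapsto s_v$ between $D$ and $\str(H)$, which I take as the vertex map of the claimed isomorphism.

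The second step is to evaluate when two of these stars intersect. For distinct $u, v \in D$, expanding $s_u \cap s_v = (\{u\} \cup N_H(u)) \cap (\{v\} \cup N_H(v))$ shows that $s_u$ and $s_v$ meet if and only if $uv \in E(H)$ (coming from $u \in N_H(v)$ or $v \in N_H(u)$) or $N_H(u) \cap N_H(v) \neq \emptyset$ (a common neighbour). The crucial point linking this to $H[D]^2$ is that any common neighbour $w$ of $u$ and $v$ has degree at least two, hence lies in $D$; therefore $u,v$ share a neighbour in $H$ if and only if they share one in $H[D]$. Similarly, as $H[D]$ is an induced subgraph, $uv \in E(H)$ with $u,v \in D$ is equivalent to $uv \in E(H[D])$.

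Finally I would combine these observations. Two vertices $u,v$ are adjacent in $H[D]^2$ exactly when $\dist_{H[D]}(u,v) \leq 2$, i.e.\ when $uv \in E(H[D])$ or $u$ and $v$ have a common neighbour in $H[D]$; by the previous paragraph this is the same as ``$uv \in E(H)$ or $u,v$ have a common neighbour in $H$'', which is precisely the condition for $s_u$ and $s_v$ to intersect. Hence $v \mapsto s_v$ is an isomorphism from $H[D]^2$ onto $G$. The step I expect to require the most care is the enumeration of maximal stars: I must rule out single-edge maximal stars and invoke the preceding observation to guarantee that distinct centers yield distinct stars, so that the map is a genuine bijection before adjacency is ever checked.
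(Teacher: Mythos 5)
Your proposal is correct and follows exactly the route the paper intends: the paper states this result as an unproved observation, presented as an immediate consequence of the preceding observation that every vertex of degree at least two in a triangle-free graph is the center of exactly one maximal star. Your write-up (ruling out single-edge maximal stars via connectivity, taking the center-to-star bijection on $D$, and matching star intersection with distance at most two in $H[D]$, noting that any common neighbor automatically lies in $D$) is precisely the filling-in of the details the paper leaves implicit.
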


As such, every hardness result or polynomial time algorithm for the recognition of squares of triangle-free graphs immediately applies to the class of star graphs of triangle-free graphs.
For an illustration of the previous observation, we refer to Figure~\ref{fig:tri_star}.
For a far more complicated star graph, we refer to Figure~\ref{fig:star_example}.

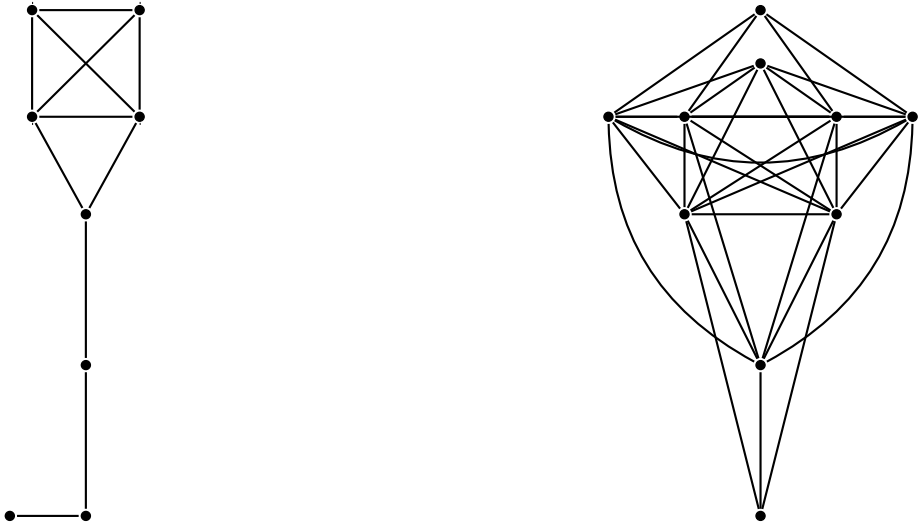
\begin{figure}[!tb]
    \centering
        \begin{tikzpicture}
            \GraphInit[unit=3,vstyle=Normal]
            \SetVertexNormal[Shape=circle, FillColor = black, MinSize=3pt]
            \tikzset{VertexStyle/.append style = {inner sep = \inners,outer sep = \outers}}
            \SetVertexNoLabel
            \begin{scope}[rotate=45]
                \grComplete[RA=1]{4}
            \end{scope}
            \Vertex[x=0, y=-2]{x4}
            \Vertex[x=0, y=-4]{x3}
            \Vertex[x=0, y=-6]{x2}
            \Vertex[x=-1, y=-6]{x1}
            \Edges(a3,x4,a2)
            \Edges(x4,x3,x2,x1)
        \end{tikzpicture}
        \hfill
        \begin{tikzpicture}
            \GraphInit[unit=3,vstyle=Normal]
            \SetVertexNormal[Shape=circle, FillColor = black, MinSize=3pt]
            \tikzset{VertexStyle/.append style = {inner sep = \inners,outer sep = \outers}}
            \SetVertexNoLabel
            \Vertex[x = 0, y = 0.707]{v78}
            \Vertex[x = 2, y = -0.707]{v854}
            \Vertex[x = 1, y = -0.707]{v754}
            
            \Vertex[x = -1, y = -0.707]{v764}
            \Vertex[x = -2, y = -0.707]{v864}
            \Vertex[x = 0, y = 0]{v65}
            
            \Vertex[x = 1, y = -2]{v543}
            
            \Vertex[x = -1, y = -2]{v643}
            
            \Vertex[x = 0, y = -4]{v432}
            
            \Vertex[x = 0, y = -6]{v321}

            \Edge(v65)(v764)
            \Edge(v643)(v854)
            \Edge(v643)(v864)
            \Edge(v321)(v432)
            \Edge(v754)(v854)
            \Edge(v643)(v754)
            \Edge(v543)(v65)
            \Edge(v764)(v78)
            \Edge(v864)(v78)
            \Edge(v543)(v854)
            \Edge(v764)(v754)
            \Edge(v764)(v864)
            \Edge(v432)(v764)
            \Edge(v643)(v65)
            \Edge(v65)(v754)
            \Edge[style = bend right](v864)(v854)
            \Edge(v432)(v643)
            \Edge(v321)(v543)
            \Edge(v643)(v764)
            \Edge(v643)(v543)
            \Edge(v864)(v754)
            \Edge(v754)(v78)
            \Edge(v854)(v78)
            \Edge(v543)(v864)
            \Edge(v543)(v764)
            \Edge(v321)(v643)
            \Edge(v432)(v543)
            \Edge(v65)(v854)
            \Edge(v543)(v754)
            \Edge(v764)(v854)
            \Edge[style=bend right](v432)(v854)
            \Edge[style=bend left](v432)(v864)
            \Edge(v432)(v754)
            \Edge(v65)(v864)
        \end{tikzpicture}
        \hfill

    \caption{A graph (left) and its star graph (right).}
    \label{fig:star_example}
\end{figure}
\section{A bound for star-critical pre-images}

Our first result is an upper bound on the number of vertices of a star-critical graph in terms of its number of maximal stars.
For a graph $H$ the difference $|V(H)| - |\str(H)|$ could be arbitrarily large, but some vertices of $H$ would have to be non-star-critical for such a property to occur (e.g. if $H \simeq K_{1,r}$ there are $r-1$ non-star-critical vertices).
In a sense, star-critical graphs are minimal with respect to the star graph obtained with the application of the star operator.
Recall that the maximal star $s_a$ absorbs the maximal star $s_b$ if, by removing one leaf of $s_b$, it becomes a substar of $s_a$.

\begin{theorem}
    \label{thm:bound}
    If $H$ is an $n$-vertex star-critical graph, $n \leq \frac{1}{2}\left(3|\str(H)|^2 - |\str(H)|\right)$.
\end{theorem}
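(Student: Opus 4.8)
The plan is to put $m = |\str(H)|$, so that $\K{S}(H)$ has exactly $m$ vertices, and to charge the vertices of $H$ injectively against the stars and the pairs of stars of $H$. Partition $V(H) = C \cup L$, where $C$ is the set of vertices that are the center of at least one maximal star and $L = V(H) \setminus C$ consists of the vertices that occur only as leaves. Since every maximal star has a unique center, sending each vertex of $C$ to one of the maximal stars it centers is injective, so $|C| \le m$. The whole difficulty is thus to show $|L| \le 3\binom{m}{2}$; granting this, $n = |C| + |L| \le m + 3\binom{m}{2} = \tfrac{1}{2}(3m^2 - m)$.

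To bound $|L|$, I would first pin down, for a pure-leaf vertex $v \in L$, exactly why its deletion can change the star graph. The point is that deletion acts on $\str(H)$ in a very restricted way. Every maximal star avoiding $v$ stays maximal in $H \setminus \{v\}$, since deleting a vertex cannot create a larger induced star. Every maximal star $s$ containing $v$ has $v$ as a leaf (as $v \in L$), so it shrinks to $s \setminus \{v\}$, which is either still maximal in $H \setminus \{v\}$ or becomes a substar of another maximal star, i.e.\ is absorbed. One checks that nothing else occurs: any maximal star of $H \setminus \{v\}$ that was not already maximal in $H$ must equal $s \setminus \{v\}$ for a unique $s \ni v$. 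This produces a natural injection $\str(H \setminus \{v\}) \hookrightarrow \str(H)$, which is a bijection precisely when no star is absorbed.

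This yields the dichotomy that drives the count: if $v \in L$ is star-critical then either (A) some two maximal stars $s_1, s_2$ satisfy $s_1 \cap s_2 = \{v\}$, so the edge $s_1 s_2$ of $\K{S}(H)$ vanishes upon deleting $v$; or (B) the deletion of $v$ triggers an absorption, i.e.\ there are distinct maximal stars $s_a, s_b$ with $v$ a leaf of $s_b$, $v \notin s_a$, and $s_b \setminus \{v\} \subseteq s_a$. Indeed, if neither holds then no star is absorbed, so the injection above is a vertex bijection, and the absence of (A) forces it to preserve adjacency (a pair of stars can lose their common vertex only if $v$ was their sole intersection); hence $\K{S}(H) \cong \K{S}(H \setminus \{v\})$, contradicting star-criticality. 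Each type is then charged injectively. For (A), the unordered pair $\{s_1, s_2\}$ determines $v$ as its unique common vertex, so distinct vertices get distinct pairs and there are at most $\binom{m}{2}$ of them. For (B), I assign the ordered pair $(s_a, s_b)$; were $v \ne v'$ to share it, then $v' \in s_b \setminus \{v\} \subseteq s_a$ would contradict $v' \notin s_a$, so this is injective into the $2\binom{m}{2}$ ordered pairs of distinct stars. Summing gives $|L| \le 3\binom{m}{2}$.

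The main obstacle is making the deletion analysis airtight: one must verify that $H \setminus \{v\}$ produces no maximal star beyond the surviving, shrunken, or absorbed images of the stars of $H$, which requires a careful treatment of induced-star maximality under vertex removal. A secondary subtlety is the degenerate single-edge stars, for which $s \setminus \{v\}$ is a lone vertex rather than a star and the center is fixed only by convention; these force their two endpoints to be true twins, and the few resulting rigid families (most notably complete graphs, and the tiny case $K_2$ with $m = 1$) are small enough to be inspected directly against the bound.
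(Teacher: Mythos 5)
Your proposal is correct in its core and follows essentially the same route as the paper's proof: the same partition of $V(H)$ into centers (bounded by $m = |\str(H)|$, one center per star) and leaf-only vertices, the same dichotomy for why a leaf-only vertex can be star-critical (loss of an edge of $\K{S}(H)$ versus a triggered absorption), and the same two counts, $\binom{m}{2}$ for the edge-loss type and $2\binom{m}{2}$ for the absorption type. The differences are in execution, and they mostly favor you. The paper simply \emph{asserts} that its set $I$ of leaf-only vertices is partitioned into the absorption type $I_A$ and the edge-loss type $I_E$; your deletion analysis --- the injection $\str(H \setminus \{v\}) \hookrightarrow \str(H)$, together with the observation that it becomes an adjacency-preserving bijection when neither (A) nor (B) occurs --- is exactly the missing justification for that assertion. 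Likewise, for the absorption count the paper charges each such vertex to a pair (star, blocking center) via a somewhat delicate neighborhood argument split into degree-one and degree-$\geq 2$ subcases, while your charge to the ordered pair $(s_a, s_b)$, with the two-line injectivity check ($v' \in s_b \setminus \{v\} \subseteq s_a$ contradicts $v' \notin s_a$), is cleaner, unifies both subcases, and yields the same $2\binom{m}{2}$.

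The loose end is the degenerate single-edge-star case you flag at the end, and there your conclusion is wrong --- although the defect is really the theorem's, not yours. Pushing your true-twin observation further: if a star-critical $v \in L$ satisfies neither (A) nor (B), then every maximal star containing $v$ must be a single edge, which forces $H$ to be complete; for $K_r$ with $r \geq 3$ the edges through $v$ pairwise intersect exactly in $\{v\}$, so (A) holds after all, and the only surviving degenerate case is $K_2$. But $K_2$ does \emph{not} pass inspection against the bound: it is star-critical (deleting either vertex changes the star graph from $K_1$ to the graph on zero vertices), yet it has $n = 2$ and $|\str(K_2)| = 1$, violating $n \leq \frac{1}{2}(3m^2 - m) = 1$. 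So $K_2$ is a counterexample to the statement as literally written; the paper's own proof silently fails on it too, since with a single maximal star neither an absorption nor the disappearance of a star-graph edge can occur, so its claimed partition $I = I_A \cup I_E$ is impossible. Your argument becomes airtight if you exclude this one case (e.g., assume $|\str(H)| \geq 2$, or $n \geq 3$) rather than claim the degenerate families verify the bound.
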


\begin{proof}
    We begin by partitioning $V(H)$ in $K = \bigcup_{s_a \in \str(H)} \{c(s_a)\}$ and $I = V(H) \setminus K$, which is a subset of its simplicial vertices.
    Note that $I$ is an independent set of $H$, otherwise there would be an edge with endpoints $\{u,v\} \subseteq I$ and either $u$ or $v$ would be in $K$.
    $I$ is further partitioned in $I_A$ and $I_E$: a vertex is in $I_A$ if its removal causes the absorption of at least one star, while the removal of a vertex in $I_E$ causes the disappearance of at least one edge of the star graph.
    
    Note that $|K| \leq |\str(H)|$ holds because each maximal star has a center.
    To bound $|I|$, we divide the analysis in the two situations where a vertex is star-critical.
    \begin{enumerate}
        \item Suppose that the removal of some $z \in I_A$ causes $s_a$, with $u = c(s_a)$, to be absorbed by $s_b$.
        One of two possibilities arise: if $z$ has only one neighbor then $z$ is the only neighbor of $u$ with this property; therefore there are at most $|K|$ such vertices.
        Otherwise, if $z$ has at least two neighbors, there is some $v \in N(z) \cap N(u)$ with $v \in s_b \setminus s_a$. However, since $I$ is an independent set, $v \in K$ and, moreover, $u,z$ are the only neighbors of $v$ in $s_a$, otherwise $s_a \setminus \{z\}$ cannot be a substar of $s_b$. Therefore, for each maximal star $s_a$, since $H$ is star-critical, there is at most one different $z \in I_A$ for each $v \in (N(u) \cap N(z) \cap K) \setminus s_a \subseteq K$ preventing $v$ from being added to $s_a$.
        This implies that the number of vertices required to avoid absorption is at most $|\str(H)|(|K \setminus \{u\}|) \leq 2\binom{|\str(H)|}{2}$.
        \item For the other condition, each $z \in I_E$ could be responsible for the intersection of a different pair of stars of $H$; i.e., there exists $s_a,s_b \in \str(H)$ such that $s_a \cap s_b = \{z\}$.
        Since we have $\binom{|\str(H)|}{2}$ pairs, we may have as many vertices in $I_E$.
    \end{enumerate}
    Summing both cases, we have $|I| \leq 3\binom{|\str(H)|}{2}$ and since $n = |K| + |I|$, it holds that $n \leq \frac{3|\str(H)|^2 - |\str(H)|}{2}$.
\end{proof}

\begin{corollary}
    If $H$ is star-critical and has no simplicial vertex, $|V(H)| \leq |\str(H)|$.
    If the only simplicial vertices of $H$ are leaves, $|V(H)| \leq 2|\str(H)|$.
\end{corollary}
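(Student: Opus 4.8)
The plan is to recycle the decomposition built in the proof of Theorem~\ref{thm:bound}: the partition $V(H) = K \cup I$ into centres and non-centres, the finer split $I = I_A \cup I_E$, and the two facts established there that $|K| \leq |\str(H)|$ and that every vertex of $I$ is simplicial.

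The first statement then follows immediately. If $H$ has no simplicial vertex then $I = \emptyset$, since $I$ is composed entirely of simplicial vertices; hence $V(H) = K$ and $|V(H)| = |K| \leq |\str(H)|$.

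For the second statement I would assume that every simplicial vertex --- and thus every vertex of $I$ --- has degree one, and then bound $|I_E|$ and $|I_A|$ separately. To show $I_E = \emptyset$: a degree-one vertex $z$ with unique neighbour $w$ can appear only as a leaf of maximal stars, and each such star is centred at $w$, so any two maximal stars through $z$ also share $w$; no pair can therefore meet exactly in $z$, which is precisely what membership of $z$ in $I_E$ would demand. (The lone degenerate possibility, in which $z$ itself centres a maximal edge-star, forces $H \simeq K_2$, where the bound $|V(H)| \leq 2|\str(H)|$ holds trivially.) To bound $I_A$: because each $z \in I_A$ has a single neighbour, only the first subcase of Case~1 of the theorem's proof applies, and that subcase contributes at most $|K|$ vertices. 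Combining these yields $|I| = |I_A| \leq |K|$, whence $|V(H)| = |K| + |I| \leq 2|K| \leq 2|\str(H)|$.

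The step I expect to be most delicate is establishing $I_E = \emptyset$: one must dispose of the edge-star corner case cleanly and argue, rather than assume, that a degree-one vertex can only occur as a leaf and only in stars centred at its unique neighbour. Everything else is a routine specialization of the counting already carried out for Theorem~\ref{thm:bound}.
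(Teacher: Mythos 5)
Your proposal is correct and takes essentially the same route as the paper: both reuse the decomposition $V(H) = K \cup I_A \cup I_E$ from the proof of Theorem~\ref{thm:bound}, obtaining the first bound from $I = \emptyset$ and the second from $|I_A| \leq |K|$ together with $I_E = \emptyset$. The only difference is that you spell out why degree-one vertices force $I_E = \emptyset$ (every maximal star through such a vertex $z$ also contains its unique neighbour, modulo the $K_2$ degeneracy), a fact the paper's two-line proof simply asserts as following from the hypothesis.
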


\begin{proof}
    The first statement follows directly from the case where $|I|$ is empty in the proof of Theorem~\ref{thm:bound}.
    The second statement is a consequence of the hypothesis that every vertex of $I_A$ has degree one and $I_E = \emptyset$.
\end{proof}

Improvements to the bound given by Theorem~\ref{thm:bound} appear to require a complete characterization of non-star-critical vertices.
Also, a better understanding of vertices that are required only for the intersection of some stars to be non-empty seems necessary in order to approach the problem through induction.
We believe that the bound on the size of the pre-image is actually linear, however our current analysis falls short of it.
In fact, we conjecture that the constant is actually two; i.e. $|V(H)| \leq 2|V(G)|$.
If this result indeed holds, it would configure an important difference from other intersection graphs.
For instance, there are clique graphs which require a clique-critical pre-image with a quadratic number of vertices~\cite{clique_critical_alcon}.

\section{Characterization}

Much of the following discussion will be about edge clique covers, a central piece on the characterization of many intersection graph classes.
We denote this family of cliques of $G$ by $\mathcal{Q} = \{Q_1, \dots, Q_n\}$.
The usual strategy in these constructions is to construct a bijection between cliques of the intersection graph and vertices of the pre-image.
Since each vertex $a \in V(G)$ must be a star in $H$, it is intuitive to partition each clique as $Q_i \sim \{Q_i^c, Q_i^f\}$, that is, the vertices $a \in Q_i^c$ correspond to the stars of $G$ with center in $v_i \in V(H)$, while the vertices $a \in Q_i^f$ correspond to the stars of $G$ where $v_i \in V(H)$ is a leaf.
We call such an edge clique cover a \tdef{star-partitioned edge clique cover} of $\mathcal{Q}$.
In a slight abuse of notation, for each $a \in V(G)$, we also denote its \tdef{center}, i.e. the unique $i$ such that $a \in Q_i^c$, by $c(a)$, its \tdef{leaf set} by $F(a) = \{i \mid a \in Q_i^f\}$ and its \tdef{cover} by $Q(a) = F(a) \cup \{c(a)\}$. For each pair of cliques $Q_i, Q_j \in \mathcal{Q}$, their \tdef{leaf-leaf intersection} is given by $\ff(i,j) = Q_i^f \cap Q_j^f$ and its \tdef{center-leaf intersection} by $\cf(i,j) = \left(Q_i^c \cap Q_j^f\right) \cup \left(Q_i^f \cap Q_j^c\right)$.

\begin{definition}[Star-compatibility]
    Given a graph $G$ and a star-partitioned edge clique cover $\mathcal{Q}$ of $G$, we say that $\mathcal{Q}$ is \tdef{star-compatible} if, for every $a \in V(G)$, $|Q(a)| \geq 2$, $\exists!\ i$ such that $a \in Q_i^c$ and if, for every $Q_i, Q_j \in \mathcal{Q}$, if $Q_i \cap Q_j \neq \emptyset$, either $\cf(i,j) = \emptyset$ or $\ff(i,j) = \emptyset$.
\end{definition}

\begin{definition}[Star-differentiability]
    \label{def:differentiability}
    Given a graph $G$ and a star-partitioned edge clique cover $\mathcal{Q}$ of $G$, we say that $\mathcal{Q}$ is \tdef{star-differentiable} if for every $Q_i \in \mathcal{Q}$ and for every pair $\{a, a'\} \subseteq Q_i$ the following conditions hold:
    \begin{enumerate}
        \item If $\{a, a'\} \subseteq Q_i^c$, there exists $Q_j, Q_k \in \mathcal{Q}$ such that $a \in Q_j^f$, $a' \in Q_k^f$, $a \notin Q_k^f$, $a' \notin Q_j^f$ and $\cf(j,k) \neq \emptyset$. Moreover, if $Q_i^c \cap Q_j^f \cap Q_k^f = \emptyset$, $\cf(j,k) \neq \emptyset$.
        \item If $a \in Q_i^c$, $a' \in Q_k^c$ and $a \notin Q_k^f$, then there is some $j \in F(a)$ with $\cf(j,k) \neq \emptyset$, $j \notin Q(a')$ and, for every $j' \in F(a)$ with $\cf(j',k) = \emptyset$, $Q_i^c \cap \bigcap_{j'} \ff(j',k) \neq \emptyset$.
        \item If $a \in Q_i^c$, $a' \in Q_k^c$ and $a \in Q_k^f$, for every $j \in F(a) \setminus \{k\}$, $\cf(j,k) = \emptyset$.
        \item If $\{a, a'\} \subseteq Q_i^f$ and $j = c(a) \neq c(a') = k$, then either $Q_i^c \cap \ff\left(j,k\right) \neq \emptyset$ or $\cf\left(j,k\right) \neq \emptyset$.
    \end{enumerate}
\end{definition}

Figures~\ref{fig:diff_cases} and~\ref{fig:diff_cases2} show the four cases of Definition~\ref{def:differentiability} as seen on the pre-image of the star graph we build from $\mathcal{Q}$ during the proof of Theorem~\ref{thm:star_characterization}.

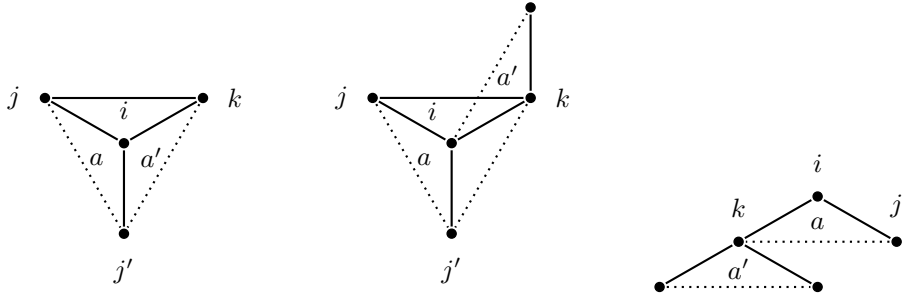
\begin{figure}[!htb]
    \centering
            \begin{tikzpicture}[scale=\gscale]
                %\draw[help lines] (-5,-5) grid (5,5);
                \begin{scope}[shift={(0cm,1cm)}]
                    \GraphInit[unit=3,vstyle=Normal]
                    \SetVertexNormal[Shape=circle, FillColor=black, MinSize=2pt]
                    \tikzset{VertexStyle/.append style = {inner sep = \inners, outer sep = \outers}}
                    \Vertex[Math,Ldist=3pt,Lpos=90,LabelOut,L={i},x=0,y=0]{i1}
                    \Vertex[Math,Ldist=3pt,Lpos=180,LabelOut,L={j},a=150, d=2]{j1}
                    \Vertex[Math,Ldist=3pt,Lpos=0,LabelOut,L={k},a=30, d=2]{k1}
                    \node[] at (-0.6,-0.35) {$a$};
                    \node[] at (0.6,-0.31) {$a'$};
                    \Vertex[Math,x=0,y=-2,Ldist=3pt,Lpos=270,LabelOut,L={j'}]{r1};
                    
                    \Edge(i1)(j1)
                    \Edge(i1)(r1)
                    \Edge(i1)(k1)
                    \Edge(j1)(k1)
                    \Edge[style=dotted](j1)(r1)
                    \Edge[style=dotted](k1)(r1)
                \end{scope}
            \end{tikzpicture}
    \hfill
            \begin{tikzpicture}[scale=\gscale]
                %\draw[help lines] (-5,-5) grid (5,5);
                \begin{scope}[shift={(0cm,0cm)}]
                    
                    \GraphInit[unit=3,vstyle=Normal]
                    \SetVertexNormal[Shape=circle, FillColor=black, MinSize=2pt]
                    \tikzset{VertexStyle/.append style = {inner sep = \inners, outer sep = \outers}}
                    \Vertex[Math,Ldist=3pt,Lpos=110,LabelOut,L={i},x=0,y=0]{i2}
                    \Vertex[Math,Ldist=3pt,Lpos=180,LabelOut,L={j},a=150, d=2]{j2}
                    \Vertex[Math,Ldist=3pt,Lpos=0,LabelOut,L={k},a=30, d=2]{k2}
                    \node[] at (-0.6,-0.35) {$a$};
                    \node[] at (1.23,1.50) {$a'$};
                    \Vertex[Math,Ldist=3pt,Lpos=270,LabelOut,L={j'},x=0,y=-2]{j'2};
                    \SetVertexNoLabel
                    \Vertex[x=1.73,y=3]{r2}
                    
                    \Edge(i2)(j2)
                    \Edge(i2)(j'2)
                    \Edge(i2)(k2)
                    \Edge(j2)(k2)
                    \Edge(r2)(k2)
                    \Edge[style=dotted](j2)(j'2)
                    \Edge[style=dotted](k2)(j'2)
                    \Edge[style=dotted](i2)(r2)
                \end{scope}
            \end{tikzpicture}
    \hfill
            \begin{tikzpicture}[rotate=180,scale=\gscale]
                %\draw[help lines] (-5,-5) grid (5,5);
                \begin{scope}[shift={(0cm,0cm)}]
                    \GraphInit[unit=3,vstyle=Normal]
                    \SetVertexNormal[Shape=circle, FillColor=black, MinSize=2pt]
                    \tikzset{VertexStyle/.append style = {inner sep = \inners, outer sep = \outers}}
                    \Vertex[Math,Ldist=3pt,Lpos=90,LabelOut,L={i},x=0,y=0]{i3}
                    \Vertex[Math,Ldist=3pt,Lpos=90,LabelOut,L={j},a=150, d=2]{j3}
                    \Vertex[Math,Ldist=3pt,Lpos=90,LabelOut,L={k},a=30, d=2]{k3}
                    \SetVertexNoLabel
                    \Vertex[Math,Ldist=3pt,Lpos=0,LabelOut,L={k},a=30, d=4]{r3}
                    \Vertex[Math,Ldist=3pt,Lpos=0,LabelOut,L={k},x=0, y=2]{l3}
                    \node[] at (0,0.6) {$a$};
                    \node[] at (1.73,1.6) {$a'$};
                    
                    \Edge(i3)(j3)
                    \Edge(i3)(k3)
                    \Edge[style=dotted](j3)(k3)
                    \Edge(k3)(r3)
                    \Edge(k3)(l3)
                    \Edge[style=dotted](r3)(l3)
                \end{scope}
            \end{tikzpicture}
    \hfill

    \caption{The first three cases of Definition~\ref{def:differentiability}, from left (first) to right (third).}
    \label{fig:diff_cases}
\end{figure}

\begin{figure}[!htb]
    \centering
    
        \begin{tikzpicture}[scale=\gscale]
            
            \begin{scope}[shift={(-4cm,-1cm)}, rotate=180]
                \GraphInit[unit=3,vstyle=Normal]
                \SetVertexNormal[Shape=circle, FillColor=black, MinSize=2pt]
                \tikzset{VertexStyle/.append style = {inner sep = \inners, outer sep = \outers}}
                \Vertex[Math,Ldist=3pt,Lpos=90,LabelOut,L={i},x=0,y=0]{i4}
                \Vertex[Math,Ldist=3pt,Lpos=90,LabelOut,L={k_1},a=150, d=2]{j4}
                \Vertex[Math,Ldist=3pt,Lpos=90,LabelOut,L={j_1},a=30, d=2]{k4}

                \SetVertexNoLabel
                \Vertex[Math,Ldist=3pt,Lpos=0,LabelOut,L={k},a=30, d=4]{r4}
                \Vertex[Math,Ldist=3pt,Lpos=0,LabelOut,L={k},x=0.4, y=2]{l4}
                \Vertex[Math,Ldist=3pt,Lpos=0,LabelOut,L={k},a=150, d=4]{m4}
                \Vertex[Math,Ldist=3pt,Lpos=0,LabelOut,L={k},x=-0.4, y=2]{n4}

                \node[] at (-1.77,1.5) {$a_1'$};
                \node[] at (1.77,1.5) {$a_1$};
                
                \Edge(i4)(j4)
                \Edge(i4)(k4)
                \Edge[style=dotted](j4)(k4)
                \Edge(k4)(r4)
                \Edge(k4)(l4)
                \Edge[style=dotted](r4)(l4)
                \Edge(j4)(n4)
                \Edge(j4)(m4)
                \Edge[style=dotted](m4)(n4)

            \end{scope}
        \end{tikzpicture}
        \hfill
        \begin{tikzpicture}[scale=\gscale]
            \begin{scope}[shift={(4cm,1cm)}]
                \GraphInit[unit=3,vstyle=Normal]
                \SetVertexNormal[Shape=circle, FillColor=black, MinSize=2pt]
                \tikzset{VertexStyle/.append style = {inner sep = \inners, outer sep = \outers}}
                \Vertex[Math,Ldist=3pt,Lpos=90,LabelOut,L={i},x=0,y=0]{i5}
                \Vertex[Math,Ldist=3pt,Lpos=90,LabelOut,L={j_2},a=210, d=2]{j5}
                \Vertex[Math,Ldist=3pt,Lpos=90,LabelOut,L={k_2},a=-30, d=2]{k5}

                \SetVertexNoLabel

                \Vertex[Math,Ldist=3pt,Lpos=0,LabelOut,L={k},a=-30, d=4]{r5}
                \Vertex[Math,Ldist=3pt,Lpos=0,LabelOut,L={k},x=0.4, y=-2]{l5}
                \Vertex[Math,Ldist=3pt,Lpos=0,LabelOut,L={k},a=210, d=4]{m5}
                \Vertex[Math,Ldist=3pt,Lpos=0,LabelOut,L={k},x=-0.4, y=-2]{n5}
                
                \node[] at (-1.77,-1.5) {$a_2$};
                \node[] at (1.77,-1.5) {$a'_2$};

                \Edge(i5)(j5)
                \Edge(i5)(k5)
                \Edge(j5)(k5)
                \Edge(k5)(r5)
                \Edge(k5)(l5)
                \Edge[style=dotted](r5)(l5)
                \Edge(j5)(n5)
                \Edge(j5)(m5)
                \Edge[style=dotted](m5)(n5)
                
            \end{scope}
        \end{tikzpicture}

    \caption{The fourth case of Definition~\ref{def:differentiability}.}
    \label{fig:diff_cases2}
\end{figure}
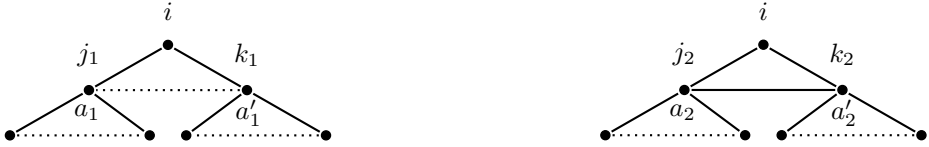

We emphasize that: (i) star-compatibility translates the structural properties of stars; and (ii) star-differentiability enumerates the possible ways that two stars that share at least one vertex are different.
Note that, the \say{missing case}, where $\{a,a'\} \in Q_i^f$ and $c(a) = c(a') = k$ is exactly the same case as 1, but with $\{a,a'\} \in Q_k^c$ instead of $Q_i^c$.

\begin{lemma}
    \label{lem:star_maximality}
    Let $G$ be a graph and $\mathcal{Q}$ a star-partitioned edge clique cover of $G$. If $\mathcal{Q}$ is star-compatible and star-differentiable then, for every pair $\{a, a'\} \subseteq V(G)$, $Q(a) \nsubseteq Q(a')$ and $Q(a') \nsubseteq Q(a)$.
\end{lemma}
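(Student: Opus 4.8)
The plan is to argue by contradiction: suppose $Q(a)\subseteq Q(a')$ for some pair with $a\neq a'$. Since the two conclusions of the lemma are symmetric under swapping $a$ and $a'$, it suffices to derive a contradiction from this single containment. Because $c(a)\in Q(a)\subseteq Q(a')$ and $a\in Q_{c(a)}^c$, the index $c(a)$ must lie in $Q(a')=\{c(a')\}\cup F(a')$, so either $c(a)=c(a')$ or $c(a)\in F(a')$. I would organise the whole argument around these alternatives, in each case mining the appropriate clause of star-differentiability for an index that belongs to $Q(a)$ but not to $Q(a')$, which contradicts the assumed inclusion.

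First, if $c(a)=c(a')$, then $\{a,a'\}\subseteq Q_{c(a)}^c$ and clause~(1) of Definition~\ref{def:differentiability} applies: it produces indices $j,l$ with $a\in Q_j^f$, $a'\in Q_l^f$, $a\notin Q_l^f$ and $a'\notin Q_j^f$. Then $j\in F(a)\subseteq Q(a)$ while $j\notin F(a')$, and since $j\neq c(a)=c(a')$ (the center and leaf parts of a clique are disjoint) we get $j\notin Q(a')$, the required contradiction. Next, suppose $c(a)\neq c(a')=:k$ with $c(a)\in F(a')$, and split on whether $a\in Q_k^f$. If $a\notin Q_k^f$, clause~(2) directly yields an index $j\in F(a)$ with $j\notin Q(a')$, and we are done. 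If instead $a\in Q_k^f$ but $a$ carries a second leaf, i.e. there exists $j\in F(a)\setminus\{k\}$, then $j\in Q(a)\subseteq Q(a')$ together with $j\neq c(a')$ forces $j\in F(a')$; hence $a'\in Q_j^f\cap Q_k^c\subseteq \cf(j,k)$, so $\cf(j,k)\neq\emptyset$, directly contradicting clause~(3), which asserts $\cf(j,k)=\emptyset$ for every $j\in F(a)\setminus\{k\}$.

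The single remaining configuration is $c(a)\neq c(a')$, $c(a)\in F(a')$, $c(a')\in F(a)$ and $F(a)=\{c(a')\}$, so that $a$ names the lone edge between the pre-image vertices $c(a)$ and $c(a')$; this is where I expect the real difficulty to concentrate, because none of the four clauses then supplies a differentiating leaf: clause~(2) does not apply since $a\in Q_{c(a')}^f$, and clause~(3) is vacuous since $F(a)\setminus\{c(a')\}=\emptyset$. My plan for this degenerate edge case is to show that $a$ and $a'$ must in fact describe the identical star, contradicting $a\neq a'$. The intended ingredients are star-compatibility applied to $Q_{c(a)}$ and $Q_{c(a')}$—whose center–leaf intersection already contains $a$, forcing $\ff(c(a),c(a'))=\emptyset$—combined with the symmetric application of clause~(3) to the pair $(a',a)$, from which I would argue that $a'$ can carry no leaf other than $c(a)$, so that $Q(a')=\{c(a'),c(a)\}=Q(a)$ and the two are the same edge-star. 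Reconciling this step with the ``missing case'' remark following Definition~\ref{def:differentiability}, and doing so without circularity, is the most delicate part of the proof, and it is the only place where star-compatibility, rather than star-differentiability alone, appears to be required.
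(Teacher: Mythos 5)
Your reduction to refuting a single containment $Q(a)\subseteq Q(a')$, and your first three cases, are correct; they run parallel to the paper's proof, which instead cases directly on the four clauses of Definition~\ref{def:differentiability} (for clause-(3) pairs the paper derives a violation of star-compatibility where you contradict clause~(3)'s conclusion directly --- both are valid, and your organization around $c(a)$ versus $c(a')$ is arguably tighter, since it never needs clause~(4)). The gap is exactly where you put it, but your plan for the degenerate case ($c(a)=i\in F(a')$, $c(a')=k\in F(a)$, $F(a)=\{k\}$) cannot be completed. First, clause~(3) applied to the swapped pair $(a',a)$ yields only that $\cf(j,i)=\emptyset$ for every $j\in F(a')\setminus\{i\}$; it does not force $F(a')=\{i\}$. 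Second, even if you had established $Q(a')=Q(a)$, that is not a contradiction: nothing in star-compatibility or star-differentiability makes the map $a\mapsto Q(a)$ injective, so ``$a$ and $a'$ are the same edge-star'' is an appeal to the intended pre-image semantics, not to the axioms.

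In fact no argument can close this case, because it harbours a genuine counterexample to the lemma as stated. Take $G=K_2$ with $V(G)=\{a,a'\}$ and $\mathcal{Q}=\{Q_1,Q_2\}$, where $Q_1=Q_2=\{a,a'\}$ as cliques, $Q_1^c=\{a\}$, $Q_1^f=\{a'\}$, $Q_2^c=\{a'\}$, $Q_2^f=\{a\}$. This is a star-partitioned edge clique cover; it is star-compatible ($|Q(a)|=|Q(a')|=2$, centers are unique, and $\ff(1,2)=\{a'\}\cap\{a\}=\emptyset$); and it is star-differentiable, since for the only pair $\{a,a'\}$ clauses~(1) and~(4) never apply, clause~(2)'s hypothesis fails (because $a\in Q_2^f$ and $a'\in Q_1^f$), and clause~(3) applies only vacuously because $F(a)\setminus\{2\}=F(a')\setminus\{1\}=\emptyset$. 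Yet $Q(a)=\{1,2\}=Q(a')$. Notably, the paper's own proof has the same hidden hole: its clause-(3) argument (``otherwise we would have $\cf(j,k)\neq\emptyset$ and $\ff(j,k)\neq\emptyset$'') needs a leaf $j\in F(a)\setminus\{k\}$ that also lies in $F(a')$, which exists in your sub-case with a second leaf but not in the degenerate one. So where your proposal works it matches (and slightly sharpens) the paper, and where it stalls the paper's proof silently breaks as well; repairing the statement appears to require strengthening Definition~\ref{def:differentiability} to exclude two mutually-centered cliques forming a bare ``double edge-star'', a point that matters beyond this lemma since it underpins the maximality and injectivity arguments in Theorem~\ref{thm:star_characterization}.
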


\begin{tproof}
    If $a$ and $a'$ do not share any clique, the statement holds.
    Otherwise they do share some clique, say $Q_i$.
    If the pair $a,a'$ satisfies properties 1, 2, or 4 of Definition~\ref{def:differentiability}, since $i \in Q(a) \cap Q(a')$, we conclude that there exists $j \in Q(a)$, $k \in Q(a')$ but $j \notin Q(a')$ and $k \notin Q(a)$, implying $Q(a) \nsubseteq Q(a')$ and $Q(a') \nsubseteq Q(a)$.
    
    For property 3, however, we first conclude that there is some $j \in Q(a)$ but $j \notin Q(a')$, otherwise we would have $\cf(j,k) \neq \emptyset$ and $\ff(j,k) \neq \emptyset$.
    Consequently, $Q(a) \nsubseteq Q(a')$.
    To see that $Q(a') \nsubseteq Q(a)$, note that $\{a, a'\} \subseteq Q_k$ and, following the same argument, we conclude that there is some $j' \in Q(a')$ but $j' \notin Q(a)$, completing the proof.
\end{tproof}

We now present a Krausz-type characterization for the class of star graphs.

\begin{theorem}
    \label{thm:star_characterization}
    An $n$-vertex graph $G$ is the star graph of some graph $H$ if and only if there is a star-compatible and star-differentiable star-partitioned edge clique cover $\mathcal{Q}$ of $G$ with at most $\frac{1}{2}(3n^2 - n)$ cliques.
\end{theorem}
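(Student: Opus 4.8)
The plan is to prove the two implications separately, in both cases exploiting the natural correspondence between vertices of $H$ and cliques of $\mathcal{Q}$: clique $Q_i$ collects the stars passing through $v_i$, refined into those centered at $v_i$ ($Q_i^c$) and those having $v_i$ as a leaf ($Q_i^f$).

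For necessity, suppose $G = \K{S}(H)$. First I would replace $H$ by a star-critical pre-image $H'$ with $\K{S}(H') = G$, obtained by iteratively deleting non-star-critical vertices; since these are simplicial, their removal neither changes the star graph nor disconnects $H'$, so $H'$ is a connected star-critical graph with $|\str(H')| = |V(G)| = n$. For each $v_i \in V(H')$ I let $Q_i$ be the set of maximal stars containing $v_i$, split as above. Two stars in $Q_i$ share $v_i$ and are thus adjacent in $G$, so each $Q_i$ is a clique; every edge $aa'$ of $G$ arises from two stars sharing some vertex $v_i$, hence lies in $Q_i$, so $\mathcal{Q}$ is an edge clique cover. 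Because each star has a unique center and at least two vertices, the partition is well defined and $|Q(a)| \ge 2$. Star-compatibility is immediate from the observation that $\cf(i,j) \neq \emptyset$ forces $v_iv_j \in E(H')$ while $\ff(i,j) \neq \emptyset$ forces $v_iv_j \notin E(H')$, so at most one can hold. The four differentiability clauses would then be verified by translating, case by case, the facts that any two distinct maximal stars sharing a vertex are incomparable and individually non-extendable into statements about $\cf$ and $\ff$. Finally, Theorem~\ref{thm:bound} gives $|\mathcal{Q}| = |V(H')| \le \frac{1}{2}(3n^2 - n)$.

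For sufficiency, given such a cover $\mathcal{Q} = \{Q_1,\dots,Q_m\}$, I would build $H$ on $\{v_1,\dots,v_m\}$ with $v_iv_j \in E(H) \iff \cf(i,j) \neq \emptyset$, and assign to each $a \in V(G)$ the subgraph $s_a$ with center $v_{c(a)}$ and leaf set $\{v_j : j \in F(a)\}$. Using $a \in Q_{c(a)}^c \cap Q_j^f \subseteq \cf(c(a),j)$, I show $s_a$ is induced: its center is adjacent to each leaf, and for leaves $j,j' \in F(a)$ the witness $a \in \ff(j,j')$ together with star-compatibility forces $\cf(j,j') = \emptyset$, so the leaves are pairwise non-adjacent. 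Lemma~\ref{lem:star_maximality} then yields that $a \mapsto s_a$ is injective, since the index sets $Q(a) = V(s_a)$ are pairwise incomparable. The adjacency correspondence $aa' \in E(G) \iff V(s_a) \cap V(s_{a'}) \neq \emptyset$ follows by matching a shared clique $Q_i$ (equivalently $v_i \in V(s_a) \cap V(s_{a'})$) with an edge of $G$, using that $\mathcal{Q}$ is an edge clique cover, so that $\K{S}(H) = G$ once maximality is established.

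The main obstacle is the maximality half of the sufficiency direction: proving that each $s_a$ is a \emph{maximal} induced star of $H$ and, conversely, that $H$ has no maximal star outside the family $\{s_a\}$. Incomparability from Lemma~\ref{lem:star_maximality} rules out one family member sitting inside another, but genuine maximality requires excluding the extension of $s_a$ by a fresh vertex $v_k$ adjacent to $v_{c(a)}$ yet non-adjacent to all leaves of $s_a$; this is exactly what the more technical differentiability clauses (those asserting $Q_i^c \cap \bigcap_{j'} \ff(j',k) \neq \emptyset$, and the center-leaf clauses of cases 2 and 3) are engineered to prevent. The delicate point is to confirm that these clauses account for every potential extension and every pair of coinciding stars, including the ambiguous situation where $s_a$ is a single edge and either endpoint could serve as the center. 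I therefore expect the bulk of the write-up to be a careful case analysis aligning each differentiability clause of Definition~\ref{def:differentiability} with one mode of non-maximality or of star coincidence.
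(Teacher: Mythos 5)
Your proposal follows essentially the same route as the paper's own proof: the same star-partitioned cover $\mathcal{Q}$ obtained from the stars through each vertex of a star-critical pre-image (with Theorem~\ref{thm:bound} supplying the $\frac{1}{2}(3n^2-n)$ bound), and, conversely, the same graph $H$ with $v_iv_j \in E(H) \iff \cf(i,j) \neq \emptyset$, with star-compatibility giving that each $s_a$ is an induced star and Lemma~\ref{lem:star_maximality} ruling out containments. The case analyses you defer --- verifying the four clauses of Definition~\ref{def:differentiability} in the forward direction, and using clauses 1--2 to block extensions $s_a \cup \{v_k\}$ and all four to exclude maximal stars of $H$ outside the family $\{s_a\}$ in the backward direction --- are exactly the content of the paper's write-up, so your plan is sound and correctly locates where the technical work lies.
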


\begin{tproof}
    In this proof, we assume that $H$ has $m$ vertices, denoted by $v_i$, and that star $s_a \in \str(H)$ corresponds to the vertex $a \in V(G)$.
    
    For the first direction of the statement, assume $H$ is a star-critical pre-image of $G$.
    For each $v_i \in V(H)$, let $S(v_i) = \{s_a \in \str(H) \mid v_i \in s_a\}$, that is, the maximal stars of $H$ that contain $v_i$.
    Clearly, we can partition these sets as $S(v_i) \sim \{S^c(v_i), S^f(v_i)\}$, that is, the stars where $v_i$ is the center and where it is a leaf, respectively.
    Our goal is to show that $\mathcal{Q} = \{Q_1, \dots, Q_m\}$, with $Q_i^c = S^c(v_i)$ and $Q_i^f = S^f(v_i)$ is a star-partitioned edge clique cover of $G$ satisfying star-compatibility and star-differentiability which.
    By Theorem~\ref{thm:bound}, this is all that remains is to be proven, since $|\mathcal{Q}| = |V(H)| \leq \frac{1}{2}(3n^2 - n)$.
    
    To verify that $\mathcal{Q}$ is a star-partitioned edge clique cover of $G$, first note that every $Q_i$ is a clique of $G$, since the corresponding stars share at least $v_i \in V(H)$.
    For the coverage part, every $aa' \in E(G)$ has two corresponding stars $s_a, s_{a'} \in \str(H)$, which share at least one vertex, say $v_i \in V(H)$, since $G \simeq \K{S}(H)$.
    By the construction of $\mathcal{Q}$, there is some $Q_i \in \mathcal{Q}$ which corresponds to every maximal star that contains $v_i$; this guarantees that $aa'$ is covered by at least one clique of $\mathcal{Q}$.
    
    For the other properties, first take two vertices $v_i,v_j \in V(H)$ with $v_iv_j \notin E(H)$ but $S(v_i) \cap S(v_j) \neq \emptyset$.
    Clearly, no star in $S(v_i) \cap S(v_j)$ may have $v_i$ and $v_j$ in different sides of its bipartition, thus $S(v_i) \cap S(v_j) = S^f(v_i) \cap S^f(v_j)$.
    Now, suppose that $v_iv_j \in E(H)$; since they are adjacent, any star in $S(v_i) \cap S(v_j)$ must have $v_i$ and $v_j$ in opposite sides of the bipartition and, thus, we have that $S(v_i) \cap S(v_j) = \left(S^c(v_i) \cap S^f(v_j)\right) \cup \left(S^f(v_i) \cap S^c(v_j)\right)$.
    Since each star has a single center, the above analysis shows that $\mathcal{Q}$ satisfies star-compatibility.
    
    For star-differentiability, let $\{s_a, s_{a'}\} \subseteq S(v_i)$. We break our analysis in the same order as the one given in Definition~\ref{def:differentiability}.
    \begin{enumerate}
        \item If $\{s_a, s_{a'}\} \subseteq S^c(v_i)$ there must be at least one leaf in each star, say $v_j$ and $v_k$, respectively, not in the other and these leaves must be adjacent to each other, otherwise at least one of the stars would not be maximal.
        That is, $\{a, a'\} \in Q_i^c$ imply that there is $Q_j,Q_k \in \mathcal{Q}$ with $a \in Q_j^f$, $a' \in Q_k^f$, $a \notin Q_k^f$, $a \notin Q_j^f$ and $\cf(j,k) \neq \emptyset$.
        \item If $s_a \in S(v_i)^c$, $s_{a'} \in S(v_k)^c$ and $s_a \notin S(v_k)^f$, $v_iv_k \in E(H)$ and to keep $v_k$ from being a leaf of $s_a$, one leaf of $s_a$, say $v_j$, must also be adjacent to $v_k$ and not a leaf of $s_{a'}$, since $v_i$ is.
        Now, for every $v_{j'} \in s_a$ and not adjacent to $v_k$, there is a clear $P_3 = v_kv_iv_{j'}$, which must be part of some maximal star.
        Moreover, the set of all $v_{j'}$ non-adjacent to $v_k$ will form a maximal star centered around $v_i$ along with $v_k$.
        Thus, $a \in Q_i^c$, $a' \in Q_k^c$ and $a \notin Q_k^f$, imply that there is some $j \in F(a)$ with $\cf(j,k) \neq \emptyset$, $j \notin Q(a')$ and, for $j' \in F(a)$ with $\cf(j', k) = \emptyset$, $Q_i^c \cap \bigcap_{j'} \ff(j',k) \neq \emptyset$.
        \item If $s_a \in S(v_i)^c$, $s_{a'} \in S(v_k)^c$ and $s_a \in S(v_k)^f$, we know that $s_a = \{v_i\}\{v_k, \dots\}$ and, since $v_k$ is not adjacent to any other leaf $v_j$ of $s_a$, we know that $S(v_j) \cap S(v_k) = S^f(v_j) \cap S^f(v_k)$ and, since $v_k$ is the center of $s_{a'}$, $v_j$ is not one of its leaves.
        Therefore, $a \in Q_i^c$, $a' \in Q_k^c$ and $a \in Q_k^f$, implies that for every $j \in F(a) \setminus \{k\}$, $\cf(j,k) = \emptyset$.
        \item If $\{s_a, s_{a'}\} \subseteq Q_i^f$ and $s_a \in S^c(v_j)$, $s_{a'} \in S^c(v_k)$, either $v_jv_k \notin E(H)$, which induces the existence a star $\{v_i\}\{v_j, v_k, \dots\}$, or $v_jv_k \in E(H)$, which must be part of a star with either $v_j$ or $v_k$ as center and the other as a leaf.
        Hence, $\{a, a'\} \subseteq Q_i^f$ and $j = c(a) \neq c(a') = k$, implies that either $Q_i^c \cap \ff\left(j,k\right) \neq \emptyset$ or $\cf\left(j,k\right) \neq \emptyset$.
    \end{enumerate}
    The above shows that $\mathcal{Q}$ is also star-differentiable, which completes this part of the proof.
    
    For the converse, take $\mathcal{Q}$ a star-partitioned edge clique cover of $G$ satisfying star-compatibility and star-differentiability  of size at most $\frac{1}{2}(3n^2 - n)$ and let $H$ be a graph with $V(H) = \{v_i \mid Q_i \in \mathcal{Q}\}$ and $E(H) = \{v_iv_j \mid \cf(i,j) \neq \emptyset\}$ and let us prove that $G \simeq \K{S}(H)$.
    
    Take $a \in V(G)$ with $c(a) = i$.
    Due to star-compatibility and the construction of $H$, we know that $H[\{v_j \mid j \in F(a)\}]$ is an independent set of $H$ and that $s_a = \{v_i\}\{v_j \mid j \in F(a)\}$ is a star of $H$.
    Suppose, however, that $s_a$ is not maximal, that is, there is some $v_k \in V(H)$ such that $v_iv_k \in E(H)$ and $s_b = s_a \cup \{v_k\}$ is a star of $H$.
    By the construction of $H$, either there is some $a' \in V(G)$ such that $Q(a) \subseteq Q(a')$, which is impossible due to Lemma~\ref{lem:star_maximality}, or some $a' \in \cf(i,k)$, which we analyze below.
    The following is based on the first two cases of Definition~\ref{def:differentiability}; the other two are impossible, since $k \notin Q(a)$ and $a \in Q_i^c$.
    
    \begin{enumerate}
        \item If $a' \in Q_i^c$, there is some $Q_j \in \mathcal{Q}$ such that $a \in Q_j^f$ and $\cf(j,k) \neq \emptyset$, which implies that $v_jv_k \in E(H)$ and $s_b$ is not a star of $H$. 
        \item If $a' \in Q_k^c$ and $a \notin Q_k^f$, at least one $j \in F(a)$ satisfies $\cf(j,k) \neq \emptyset$ and $j \notin Q(a')$.
        This gives us that $v_jv_k \in E(H)$ and $s_b$ is not a star of $H$.
    \end{enumerate}
    
    Therefore, we conclude that $a'$ cannot exist, that $s_a$ is maximal and, consequently that $V(G) \subseteq V(\K{S}(H))$.
    
    To show that $V(\K{S}(H)) \subseteq V(G)$, take $s = \{v_i\}L$, with $s \in \str(H)$, and suppose that there is some $j,k \in L$ and that for every pair $a \in \cf(i,j)$ and $a' \in \cf(i,k)$, $a \notin Q_k$ and $a' \notin Q_j$.
    That is, $Q_i \cap Q_j \cap Q_k = \emptyset$, due to star-compatibility and the hypothesis that $jk \notin E(H)$.
    Once again, we analyze the possibilities in terms of Definition~\ref{def:differentiability}.
    
    \begin{enumerate}
        \item If $c(a) = c(a') = i$, we have that $\cf(j,k)  \neq \emptyset$, implying that $v_jv_k \in E(H)$, contradicting the hypothesis that $s$ exists.
        \item If $c(a) = i$ and $c(a') = k$, there is some $j' \in Q(a)$ with $\cf(j,k) \neq \emptyset$. To conclude that $j = j'$, we note that, if $j \neq j'$, it would be required that $Q_i^c \cap \ff(j,k) \neq \emptyset$, which is impossible since $Q_i \cap Q_j \cap Q_k = \emptyset$.
        Once again, contradicting the hypothesis that such an $s$ exists.
        \item Trivially impossible since $Q_i \cap Q_j \cap Q_k = \emptyset$.
        \item If $j = c(a) \neq c(a') = k$, either $Q_i^c \cap \ff(j,k) \neq \emptyset$, which is impossible since $Q_i \cap Q_j \cap Q_k = \emptyset$, or $\cf(j,k) \neq \emptyset$, implies that $v_jv_k \in E(H)$ and that $s$ is not a star.
    \end{enumerate}
    
    The above allows us to conclude that there is no $s \in \str(H)$ generated by cliques not pairwise intersecting.
    Such intersection has a unique vertex of $G$ in it due to Lemma~\ref{lem:star_maximality}, which allows us to conclude $V(\K{S}(H)) \subseteq V(G)$ and, consequently, that $V(\K{S}(H)) = V(G)$.
    
    To show that $E(G) \subseteq E(\K{S}(H))$, we first take an edge $ab \in E(G)$.
    Since $\mathcal{Q}$ is a star-partitioned edge clique cover of $G$, there is some $i$ such that $\{a,b\} \subseteq Q_i$ and, because $V(G) = V(\K{S}(H))$ and the construction of $H$, there are corresponding stars $s_a, s_b \in \str(H)$ with $v_i \in s_a \cap s_b$ which guarantee that $ab \in E(\K{S}(H))$.
    For $E(\K{S}(H)) \subseteq E(G)$, take two intersecting stars $s_a,s_b \in \str(H)$ and note that, since $a,b \in \K{S}(H) = V(G)$ and $\mathcal{Q}$ is a star-partitioned edge clique cover of $G$, $ab \in E(G)$ and we conclude that $E(G) = E(\K{S}(H))$, completing the proof.
\end{tproof}

We now pose a version of the decision problem for star graph recognition, which we call \textsc{Star Graph Recognition}.
We will further require that the output for any algorithm for \textsc{Star Graph Recognition} is already star-partitioned.

\problem{Star Graph Recognition}{A graph $G$.}{Is there a star-partitioned edge clique cover $\mathcal{Q}$ of $G$ satisfying star-compatibility and star-differentiability?}

Theorem~\ref{thm:verification_alg} provides a straightforward verification algorithm to check if a star-partitioned edge clique cover is star-compatible and star-differentiable.

\begin{theorem}
    \label{thm:verification_alg}
    Given a graph $G$ of order $n$, there is an $\bigO{\max\{n^2m, m^2\}n^2m}$ algorithm to decide if a star-partitioned family $\mathcal{Q} \subseteq 2^{V(G)}$ of size $m$ is an edge clique cover of $G$ satisfying star-compatibility and star-differentiability. 
\end{theorem}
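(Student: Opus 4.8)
The plan is to give a single-pass verification algorithm that checks, in turn, each defining condition of a star-compatible and star-differentiable star-partitioned edge clique cover, and to bound the cost of every check separately. First I would set up the data structures that make the later queries cheap: a membership table recording, for each $a \in V(G)$ and each clique $Q_i$, whether $a \in Q_i^c$, $a \in Q_i^f$, or $a \notin Q_i$, together with $c(a)$ and $F(a)$ for every $a$; and a pair of tables recording, for all $\bigO{m^2}$ pairs $(i,j)$, whether $\ff(i,j)$ and $\cf(i,j)$ are empty (and, where needed, the sets themselves). Building the membership table costs $\bigO{nm}$ and building the emptiness tables costs $\bigO{m^2n}$, since each of the $\bigO{m^2}$ intersections is computed over the ground set $V(G)$.

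With these in hand, the basic edge-clique-cover and star-partition conditions are cheap. Verifying that each $Q_i$ is a clique and that every edge of $G$ lies in some $Q_i$ takes $\bigO{mn^2}$; verifying that every $a$ has $|Q(a)| \geq 2$ and a unique center is $\bigO{nm}$ from the membership table. Star-compatibility then reduces to scanning the $\bigO{m^2}$ pairs $(i,j)$ and rejecting whenever $Q_i \cap Q_j \neq \emptyset$ but both $\cf(i,j) \neq \emptyset$ and $\ff(i,j) \neq \emptyset$; using the precomputed tables this is $\bigO{1}$ per pair, hence $\bigO{m^2}$ overall.

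The heart of the algorithm, and the dominant term, is checking star-differentiability. Here I would enumerate every triple $(Q_i, a, a')$ with $\{a, a'\} \subseteq Q_i$ --- there are $\sum_i \binom{|Q_i|}{2} = \bigO{n^2 m}$ of them --- and, from the membership table, decide in $\bigO{1}$ which of the four cases of Definition~\ref{def:differentiability} applies, according to whether $a,a'$ are centers or leaves of $Q_i$ and how their own centers relate. For each triple I would then perform the witness search that the applicable case prescribes: the existential cases range over candidate witness cliques, in the worst case over all pairs $(Q_j, Q_k)$, which is $\bigO{m^2}$ candidates each tested in $\bigO{1}$ or, when a triple intersection such as $Q_i^c \cap Q_j^f \cap Q_k^f$ must be inspected, in $\bigO{n}$; the universal cases instead iterate over $F(a)$ and accumulate intersections of the precomputed $\ff(\cdot,k)$ sets with $Q_i^c$. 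Bounding the inner work uniformly by $\max\{n^2m, m^2\}$ and multiplying by the $\bigO{n^2 m}$ outer iterations gives the claimed $\bigO{\max\{n^2m, m^2\}n^2m}$ bound, which dominates all earlier steps.

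The main obstacle I anticipate is precisely this differentiability check: each of the four cases has a genuinely different quantifier structure --- Cases~1 and~4 are (essentially symmetric) existential conditions on a pair of witness cliques, Case~2 mixes an existential witness with a universal condition ranging over all $j' \in F(a)$, and Case~3 is a pure universal condition --- so the bookkeeping of which witnesses to search for, and the correct handling of the ``moreover'' clauses, must be carried out case by case while keeping each search within the stated budget. Correctness of the algorithm is immediate once one observes that every loop faithfully enumerates the objects quantified in the definitions of star-compatibility and star-differentiability; the content of the proof is therefore the running-time accounting, and the care needed to ensure that the nested searches in the existential cases do not exceed $\max\{n^2m, m^2\}$ per triple.
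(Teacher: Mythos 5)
Your proposal is correct and takes essentially the same route as the paper's proof: precompute membership and pairwise-intersection tables, verify the clique/cover and star-compatibility conditions in $\bigO{n^2m}$ and $\bigO{nm^2}$ time respectively, then enumerate the $\bigO{n^2m}$ triples $(Q_i, a, a')$ and resolve the four differentiability cases within a per-triple budget of $\bigO{\max\{n^2m, m^2\}}$. The differences are cosmetic --- the paper's own accounting of the witness searches (including the ``moreover'' clause) is no tighter than yours --- so nothing further is needed.
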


\begin{tproof}
    The first task is to determine whether or not $\mathcal{Q}$ is a star-partitioned edge clique cover of $G$.
    The usual $n^2$ algorithm that tests if each $Q_i$ is a clique suffices.
    To check if $\mathcal{Q}$ is an edge clique cover, for each of the $\bigO{n^2}$ edges, we test if one of the $n$ cliques contains it. 
    This simple test takes $\bigO{n^2m}$ time.
    
    To check for star-compatibility: first, for each vertex $a$ of $G$ and each clique $Q_i$, verify if there is a single $i$ such that $a \in Q_i^c$ and at least one $j$ with $a \in Q_j^f$;
    afterwards, for each pair of intersecting cliques $Q_i, Q_j$, test if $\cf(i,j) = \emptyset$ or $\ff(i,j) = \emptyset$.
    The entire process takes $\bigO{nm^2}$ time.
    
    For star-differentiability, we assume that every pairwise intersection of $\mathcal{Q}$ has already been computed in time $\bigO{nm^2}$, and each query $\cf(j,k)$ and $\ff(j,k)$ takes $\bigO{1}$ time.
    Now, for each clique $Q_i$ and for each pair of vertices $\{a, a'\} \in Q_i$, we must check one of the four conditions as follows.
    \begin{enumerate}
        \item If $c(a) = c(a') = i$, for each pair $j \in Q(a)$, $k \in Q(a')$, check if $a' \notin Q_j^f$, $a \notin Q_k^f$ and $\cf(j,k) \neq \emptyset$; this case takes $\bigO{n^2}$.
        \item If $c(a) = i, c(a') = k$ and $a \notin Q_k^f$, for each $j \in F(a)$, check if either $\cf(j,k) \neq \emptyset$ and $j \notin Q(a')$ or $\cf(j,k) = \emptyset$ and $Q_i^c \cap \ff(j,k) \neq \emptyset$; this case takes $\bigO{n^2m}$ time.
        \item If $c(a) = i, c(a') = k$ and $a \in Q_k^f$, check for each $j \in F(a) \setminus \{k\}$, if $\cf(j,k) = \emptyset$, taking $\bigO{n}$ time.
        \item If $j = c(a) \neq c(a') = k$, we check if $Q_i^c \cap \ff(j,k) \neq \emptyset$ in $\bigO{n}$ time, and if $\cf(j,k)$ in $\bigO{1}$ time.
    \end{enumerate}
    In the worst case scenario, we will spend $\bigO{\max{n^2m, m^2}}$ time for each $Q_i$ and each pair $\{a, a'\} \subseteq Q_i$, of which there are $\bigO{n^2m}$ combinations, and conclude that the whole algorithm takes no more than $\bigO{\max\{n^2m, m^2\}n^2m}$ time.
\end{tproof}

Together with Theorems~\ref{thm:bound} and~\ref{thm:star_characterization}, Theorem~\ref{thm:verification_alg} implies that deciding whether or not a graph is a star graph is in $\mathsf{NP}$.

\begin{theorem}
    \textsc{Star Graph Recognition} is in $\mathsf{NP}$.
\end{theorem}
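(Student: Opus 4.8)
The plan is to produce a succinct certificate and a polynomial-time verifier, matching the standard definition of \NP. By Theorem~\ref{thm:star_characterization}, an $n$-vertex graph $G$ is a star graph if and only if it admits a star-compatible and star-differentiable star-partitioned edge clique cover $\mathcal{Q}$ with at most $\frac{1}{2}(3n^2 - n)$ cliques. I would therefore take such a $\mathcal{Q}$ as the certificate. Correctness of the guess-and-check scheme follows immediately from the biconditional: for a yes-instance a qualifying cover exists and can be guessed, while for a no-instance \emph{no} star-partitioned edge clique cover is simultaneously star-compatible and star-differentiable, so no accepting certificate exists and the verifier always rejects.

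The first thing to check carefully is that this certificate is genuinely polynomial in the size of $G$. Each clique $Q_i$ is specified by its vertex set together with the bipartition $\{Q_i^c, Q_i^f\}$, which costs $\bigO{n}$ bits; since Theorem~\ref{thm:star_characterization} caps the number of cliques at $m \leq \frac{1}{2}(3n^2 - n) = \bigO{n^2}$, the whole certificate occupies $\bigO{n^3}$ bits. This is precisely where the quadratic bound of Theorem~\ref{thm:bound} does the essential work: it is the a priori bound on the number of cliques that converts \say{some valid cover exists} into \say{a polynomially bounded valid cover exists}, which is what a membership-in-\NP argument requires. For the verification step I would invoke Theorem~\ref{thm:verification_alg}, which decides in time $\bigO{\max\{n^2m, m^2\}n^2m}$ whether a given star-partitioned family of size $m$ is an edge clique cover satisfying both properties. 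Substituting $m = \bigO{n^2}$ gives $n^2 m = \bigO{n^4}$ and $m^2 = \bigO{n^4}$, so the verifier runs in $\bigO{n^8}$ time, which is polynomial.

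I do not expect a genuine obstacle here, since Theorems~\ref{thm:bound},~\ref{thm:star_characterization}, and~\ref{thm:verification_alg} already carry the entire weight of the argument; the remaining work is purely bookkeeping. The only two points deserving explicit attention are (i) confirming that both the certificate length and the verification running time remain polynomial once $m$ is replaced by its $\bigO{n^2}$ bound, and (ii) using the \emph{equivalence} direction of Theorem~\ref{thm:star_characterization} to guarantee soundness, namely that no spurious cover is ever accepted on a no-instance. With these in hand, the combination of a polynomially bounded certificate and a polynomial-time verifier establishes that \textsc{Star Graph Recognition} is in \NP.
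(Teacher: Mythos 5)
Your proof is correct and follows exactly the paper's own argument: the paper also establishes \NP{} membership by combining Theorem~\ref{thm:star_characterization} (certificate existence with the $\frac{1}{2}(3n^2-n)$ clique bound inherited from Theorem~\ref{thm:bound}) with the polynomial-time verifier of Theorem~\ref{thm:verification_alg}. Your explicit bookkeeping of the certificate size ($\bigO{n^3}$ bits) and verification time ($\bigO{n^8}$ after substituting $m = \bigO{n^2}$) just spells out details the paper leaves implicit.
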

\section{Properties}
The next theorem uses the known result, due to~\cite{moon_moser}, that a graph of order $n$ has at most $3^{n/3}$ maximal independent sets.

\begin{theorem}
    If $G$ is the star graph of a $n$ vertex graph $H$, then $|V(G)| \leq n3^{\Delta(H)/3}$.
\end{theorem}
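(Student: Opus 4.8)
The plan is to count the maximal stars of $H$ by grouping them according to their centers, and then to reduce the count at each center to counting maximal independent sets. Since the vertices of $G = \K{S}(H)$ are by definition in bijection with the elements of $\str(H)$, we have $|V(G)| = |\str(H)|$, so it suffices to prove $|\str(H)| \leq n\,3^{\Delta(H)/3}$.

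First I would assign to every maximal star $s$ a single center: when $s$ has at least two leaves this is the unique vertex of degree at least two inside $s$, and when $s$ is a single edge I would fix an arbitrary but consistent choice among its two vertices. This gives the decomposition $|\str(H)| \leq \sum_{v \in V(H)} |\{s \in \str(H) : c(s) = v\}|$. The key step is to show that, for a fixed $v$, the leaf set of any maximal star centered at $v$ is a maximal independent set of the subgraph $H[N(v)]$ induced on the open neighborhood of $v$. Indeed, the leaves of a star centered at $v$ lie in $N(v)$ and are pairwise non-adjacent, so they form an independent set of $H[N(v)]$; if that set were not maximal, some $w \in N(v)$ non-adjacent to every leaf could be added as a new leaf, contradicting the maximality of the star. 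Because the center $v$ together with its leaf set determines the star, the assignment sending a maximal star centered at $v$ to its leaf set is injective into the family of maximal independent sets of $H[N(v)]$, a graph on $\deg(v) \leq \Delta(H)$ vertices.

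Finally I would invoke the Moon--Moser bound~\cite{moon_moser}: the graph $H[N(v)]$ has at most $3^{\deg(v)/3} \leq 3^{\Delta(H)/3}$ maximal independent sets. Combining this with the injection above bounds the number of maximal stars centered at $v$ by $3^{\Delta(H)/3}$, and summing over the $n$ vertices yields $|\str(H)| \leq n\,3^{\Delta(H)/3}$, which is the desired inequality.

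I expect the only delicate point to be the treatment of single-edge maximal stars, whose center is defined only arbitrarily. One must verify that, for a maximal edge-star $uv$ with chosen center $u$, the singleton $\{v\}$ really is a maximal independent set of $H[N(u)]$; this amounts to checking that maximality of the edge-star forces every neighbor of $u$ other than $v$ to be adjacent to $v$ (otherwise that neighbor could be added as a second leaf), so that no maximal star escapes the injection. Everything else is a routine application of the Moon--Moser estimate summed over the vertices of $H$.
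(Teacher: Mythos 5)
Your proof is correct and follows essentially the same route as the paper: group maximal stars by their centers, observe that the leaf set of a maximal star centered at $v$ is a maximal independent set of $H[N(v)]$, apply the Moon--Moser bound $3^{\Delta(H)/3}$ at each vertex, and sum over the $n$ vertices. In fact your write-up is slightly more careful than the paper's, since you make the injection explicit and verify the single-edge (true-twin) case, which the paper passes over with the word \say{might}.
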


\begin{tproof}
    For every $v \in V(H)$, define $H_v = H[N(v)]$ and note that each maximal independent set of $H_v$ might induce a maximal star of $H$ centered around $v$.
    Since $|V(H_v)| \leq \Delta(H)$, we have that $H_v$ has at most $3^{\Delta(H)/3}$ maximal independent sets and, therefore, $H$ has at most $3^{\Delta(H)/3}$ maximal stars centered around $v$.
    Summing for every $v \in V(H)$ we arrive at the $n3^{\Delta(H)/3}$ bound.
\end{tproof}

The observation made in the proof of the previous theorem is quite useful when one wants to generate $\str(H)$.
In fact, we can do that with \textit{polynomial delay}, i.e., the time between outputting two maximal stars is upper bounded by a polynomial on the size of the graph.
To do so, we employ the polynomial delay algorithm for maximal independent sets of Johnson et al.~\cite{independent_poly_delay}.

\begin{theorem}
    There exists an algorithm that, given a graph $H$ on $n$ vertices, generates $\str(H)$ such that the time between the output of two successive members of $\str(H)$ is bounded by a polynomial in $n$.
\end{theorem}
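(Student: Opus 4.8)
The plan is to reduce the enumeration of $\str(H)$ to the enumeration of maximal independent sets of the $n$ neighborhood subgraphs $H_v = H[N(v)]$, exploiting the correspondence already used in the proof of the previous theorem: a set $L \subseteq N(v)$ together with $v$ forms an induced star centered at $v$ precisely when $L$ is an independent set of $H_v$, and this star is maximal among the stars centered at $v$ exactly when $L$ is a maximal independent set of $H_v$. For each $v$ I would run the polynomial-delay algorithm of Johnson et al.~\cite{independent_poly_delay} on $H_v$, processing the vertices in a fixed order, and attempt to emit the star $\{v\}L$ for each maximal independent set $L$ it produces.

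Two discrepancies between ``maximal independent set of $H_v$'' and ``member of $\str(H)$'' must be reconciled before the emitted objects are exactly $\str(H)$. First, when $|L| \geq 2$ the center of $\{v\}L$ is forced to be $v$, since it is the unique vertex of degree at least two; thus such a star has a single pre-image and is emitted once, and it is genuinely maximal in $H$ because any strict superstar would have to be rooted at $v$ as well, contradicting the maximality of $L$ in $H_v$. These are output immediately. Second, the singleton case $L = \{w\}$ is delicate: the edge $\{v,w\}$ is a maximal star only if it cannot be absorbed by re-rooting at either endpoint, which (given $vw \in E(H)$) is equivalent to $N[v] = N[w]$, a polynomial-time test; moreover such an edge arises in both $H_v$ and $H_w$, so I would emit it only from the endpoint of smaller index to avoid duplicates. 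Since every maximal star with at least two leaves is recovered from its center's neighborhood and every single-edge maximal star is recovered from its true-twin test, correctness follows.

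The hard part will be establishing the polynomial delay bound in the presence of this filtering, as skipped candidates (non-maximal or duplicate singletons, and neighborhoods that emit nothing) must not separate two consecutive outputs by more than polynomial time. The resolving observation is that the skipped candidates are scarce: a singleton $\{w\}$ is maximal independent in $H_v$ only when $w$ is universal in $H_v$, and universal vertices are pairwise adjacent, so at most $|N(v)| \le n$ singletons are ever examined within a single $H_v$; furthermore, $H_v$ emits no star of size at least two exactly when $H_v$ is complete, i.e. $v$ is simplicial, in which case Johnson's algorithm enumerates at most $n$ sets before exhausting $H_v$. Hence between two successive emissions the algorithm traverses at most $\bigO{n}$ vertices $v$ and spends at most $\bigO{n}$ polynomial-delay steps on each, so the total gap is $\bigO{n^2}$ times the per-step cost; combined with the polynomial-delay guarantee of~\cite{independent_poly_delay} for each per-neighborhood enumeration, this yields the desired polynomial delay and completes the argument.
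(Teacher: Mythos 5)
Your proposal is correct and takes essentially the same route as the paper: reduce the problem to Johnson et al.'s polynomial-delay enumeration of maximal independent sets on each neighborhood graph $H_v = H[N(v)]$, handle the two-vertex (single-edge) maximal stars via the true-twin test $N[v]=N[w]$, and bound the delay by charging each skipped singleton and each empty (simplicial) neighborhood at most $n$ wasted enumeration steps, giving an overall delay of order $i(n)n^2$ as in the paper. The only difference is organizational: the paper outputs all single-edge maximal stars in a separate first phase and simply discards size-one independent sets in the main loop, whereas you filter and deduplicate them inline, which changes nothing essential.
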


\begin{proof}
    Let $i(n)$ denote the delay between the generation of two maximal independent sets on a graph with $n$ vertices.
    First, we can test for each edge $uv \in E(H)$ if $\{u\}\{v\}$ is a maximal star of $H$: this is the case if and only if $u$ and $v$ are a pair of true twin vertices.
    After this step is done, we have all maximal stars of size two and, since there is a polynomial number of stars of this size, we have polynomial delay.
    Now for each vertex $v \in V(H)$ we use the polynomial delay algorithm of~\cite{independent_poly_delay} to generate all the maximal independent sets of $H_v = H[N(v)]$, discarding all generated independent sets of size one.
    Essentially, this step generates all maximal induced stars of size at least three centered at $v$ and, moreover, the delay between the output of two distinct stars is at most $i(n)n$, since $H_v$ may only have independent sets of size one.
    Finally, this delay of $i(n)n$ may occur for roughly each $H_v$, yielding a total delay of the order $i(n)n^2$.
\end{proof}

\begin{theorem}
    \label{thm:star_cutless}
    If $G$ is a star graph, $G$ has no cut-vertex.
\end{theorem}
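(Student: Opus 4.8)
The plan is to argue by contradiction. Suppose some star graph $G = \K{S}(H)$ has a cut-vertex; let $s_a \in \str(H)$ be the maximal star corresponding to it (throughout I identify each vertex of $G$ with its star). Since $G$ is connected and $s_a$ is a cut-vertex, $G - s_a$ has at least two components and $s_a$ has a neighbour in each; pick stars $s_b$ and $s_c$ adjacent to $s_a$ but lying in distinct components of $G - s_a$. The two elementary facts I would lean on are: (i) for every $v \in V(H)$ the family $S(v) = \{s \in \str(H) \mid v \in s\}$ of maximal stars through $v$ is a clique of $G$, since its members pairwise share $v$; and (ii) whenever $uv \in E(H)$ the edge $\{u\}\{v\}$ extends (by completing $\{v\}$ to a maximal independent set of $H[N(u)]$) to a maximal star containing both $u$ and $v$, so $S(u) \cap S(v) \neq \emptyset$. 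The whole proof then reduces to exhibiting, in each configuration, a short walk from $s_b$ to $s_c$ inside $G - s_a$, contradicting the choice of components.

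Because $s_b$ and $s_c$ meet $s_a$, fix vertices $p \in s_b \cap s_a$ and $q \in s_c \cap s_a$ of $H$. First I would rule out $p = q$: then $s_b, s_c \in S(p)$, which is a clique, so $s_bs_c \in E(G)$, and this edge survives in $G - s_a$ since neither endpoint is $s_a$, placing $s_b$ and $s_c$ in the same component — a contradiction. Hence $p \neq q$, and both are vertices of the star $s_a$, so each is either its centre $w = c(s_a)$ or a leaf adjacent to $w$. The generic routing is then to build, for a leaf $v \in \{p,q\}$, an auxiliary maximal star $t_v$ centred at $v$ that contains $w$ (extend $\{w\}$ to a maximal independent set of $H[N(v)]$), checking that $t_v \neq s_a$ because their centres, or their sizes, differ. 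When both $p,q$ are leaves this yields the walk $s_b - t_p - t_q - s_c$, with consecutive stars sharing $p$, then $w$, then $q$; when one of $p,q$ equals $w$ the walk shortens to $s_b - t_q - s_c$. In each case all intermediate stars differ from $s_a$, so the walk lives in $G - s_a$.

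The step I expect to be the real obstacle is the degenerate case where $s_a$ is a single edge $\{w\}\{z\}$, that is $V(s_a) = \{w,z\}$ with $w,z$ true twins (a two-vertex star is maximal precisely when its endpoints are true twins). Here $\{p,q\} = \{w,z\}$, but the centre trick collapses: no maximal star other than $s_a$ contains both $w$ and $z$, since any additional leaf of $w$ is a neighbour of $w$, hence of $z$ by twinness, and therefore cannot be independent from the leaf $z$. Thus $S(w) \cap S(z) = \{s_a\}$, and deleting $s_a$ severs the direct link between the two bags. To route around this I would detour through a common neighbour: if $w$ (equivalently $z$) has no neighbour besides the other endpoint then $H \cong K_2$, $G$ is a single vertex and has no cut-vertex; otherwise take $x \in N(w) \setminus \{z\} = N(z) \setminus \{w\}$ and form maximal stars $t_x^{w}$ and $t_x^{z}$ centred at $x$ containing $w$ and $z$ respectively (they exist because $w$ and $z$, being adjacent, cannot be leaves of one common star). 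Since $x \notin s_a$, both are distinct from $s_a$, and $s_b - t_x^{w} - t_x^{z} - s_c$ is the desired walk in $G - s_a$, with consecutive stars sharing $w$, then $x$, then $z$ — again a contradiction.

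Assembling the cases shows that no $s_a$ can be a cut-vertex, so $G$ is biconnected. The only genuinely delicate point is the edge/true-twin case above; everything else is routing through the clique-bags $S(v)$ along the length-two path $p - w - q$ inside $s_a$, together with the recurring bookkeeping that each auxiliary star is provably different from $s_a$, which is exactly where one must stay alert to two-vertex stars.
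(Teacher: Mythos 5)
Your skeleton (route $s_b$ to $s_c$ through auxiliary stars meeting $s_a$, after disposing of the case $p=q$ via the clique $S(p)$) is sound, and your facts (i) and (ii) are both correct. The gap is that you repeatedly need a maximal star with a \emph{prescribed center}, and such a star need not exist. Extending $\{w\}$ to a maximal independent set $M$ of $H[N(v)]$ produces a maximal star $\{v\}M$ only when $|M|\geq 2$; if every neighbour of $v$ other than $w$ is adjacent to $w$, the extension stops at $M=\{w\}$, and the two-vertex star $\{v\}\{w\}$ is maximal only when $v$ and $w$ are true twins --- which is impossible in your generic case, since $s_a$ has a second leaf adjacent to $w$ but not to $v$. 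In that configuration \emph{every} maximal star containing both $v$ and $w$ is centered at $w$, and it can happen that the only such star is $s_a$ itself, so your $t_v$ simply does not exist. Concretely, take $V(H)=\{w,p,q,x\}$ and $E(H)=\{wp,wq,wx,px\}$: then $s_a=\{w\}\{p,q\}$ is maximal, the edge $\{p\}\{x\}$ is a maximal star meeting $s_a$ exactly in $p$, but no maximal star is centered at $p$ and contains $w$ (the set $\{w\}$ is already maximal independent in $H[N(p)]$, and $p,w$ are not twins), and $s_a$ is the \emph{unique} maximal star containing $\{p,w\}$. So the walk $s_b - t_p - t_q - s_c$ cannot be formed. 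The same flaw affects your true-twin case: the parenthetical ``they exist because $w$ and $z$ cannot be leaves of one common star'' does not justify a maximal star \emph{centered at} $x$; if all neighbours of $x$ other than $w$ are adjacent to $w$ and $x,w$ are not twins, $t_x^w$ again fails to exist.

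The distinction to keep in mind is that prescribing a center is legitimate when you can exhibit \emph{two non-adjacent} initial leaves (then the extension has size at least two and maximality with that center follows); with a single initial leaf, maximality can force the center to flip to the other endpoint. This is exactly how the paper's proof avoids the problem: it does a case analysis on where the centers of the three stars sit, builds auxiliary stars only from a vertex together with two non-adjacent neighbours (e.g.\ $\{v_j\}\{v_i,v_k,\dots\}$ with $v_iv_k\notin E(H)$), and otherwise uses maximality to force a vertex of one star to belong to another (``otherwise $s_a$ would not be maximal''), rather than asserting a new star with a chosen center. Your argument can be repaired along the same lines --- when $N(p)\setminus\{w\}\subseteq N(w)$, every leaf and the center of $s_b$ lie in $N(w)\cup\{w\}$, and fact (ii) applied to such a vertex and $w$ yields a star distinct from $s_a$ through $w$ adjacent to $s_b$ --- but that extra casework is the real content of the theorem, and it is missing from your proof as written.
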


\begin{tproof}
    If $|V(G)| \leq 4$, we are done as there are only 5 graphs that satisfy these constraints and none of them contain a cut-vertex.
    They are $K_1$, $K_2$, $K_3$, $K_4$ and $K_4$ with one missing edge (the diamond).
    The first three are trivial, while the last two are shown in Figure~\ref{fig:four_star}.
    
    For graphs with 5 or more vertices, suppose that there is some cut-vertex $x \in G$, that $A,B$ are two of the connected components obtained after removing $x$ from $G$ and take a pair of vertices $a \in V(A) \cap N(x)$, $b \in V(B) \cap N(x)$.
    Suppose now that $G = \K{S}(H)$ for some $H$ and take the stars $s_a, s_b, s_x$ corresponding to $a, b, x$, respectively.
    Since $ab \notin E(G)$ and $ax, bx \in E(G)$, it holds that $s_a \cap s_x \neq \emptyset$ and $s_b \cap s_x \neq \emptyset$ but $s_a \cap s_b = \emptyset$.
    
    If $c(a) = c(x) = i$ and $k = c(b) \neq c(x)$, $s_x$ and $s_b$ share at least one leaf, say $v_j$, since they intercept at some vertex, and $v_j \notin s_a$.
    However, there is no leaf $v_{j'} \in s_a$ adjacent to $v_j$, otherwise there would be an edge $v_jv_{j'} \in E(H)$ and, consequently, some star $s_y$, corresponding to vertex $y \in V(G)$, that keeps $A,B$ connected and intercepts $s_a, s_b, s_x$.
    Therefore, we conclude that no leaf of $s_a$ is adjacent to $v_j$ and, since $c(a) = c(x)$ and $v_iv_j \in E(H)$, we conclude that $v_j \in s_a$, otherwise it would not be maximal, and, consequently, $v_j \in s_a \cap s_b$ and $ab \in E(H)$, which contradicts the hypothesis that $A,B$ are disconnected after removing $x$.
    The case where $c(x) = c(b) \neq c(a)$ follows the exact same argument.
    
    Now if $c(a) \neq c(x) = i$ and $c(x) \neq c(b)$, it is easy to see that $v_i$ cannot be a leaf of both $s_a$ and $s_b$ simultaneously, otherwise $v_i \in s_a \cap s_b$ and $ab \in E(H)$.
    So we have two cases to analyze:
    \begin{enumerate}
        \item If $v_i$ is a leaf of $s_a$, $v_j \in s_x \cap s_b$ and $k = c(b)$, clearly $s_y = \{v_j\}\{v_i, v_k, \dots \}$ is a maximal star of $H$ that intercepts $s_a, s_b, s_x$, keeping $A,B$ from being disconnected.
        The case where $v_i$ is a leaf $s_b$ is the same, and we omit it for brevity.
        \item If $v_i$ is not a leaf of neither $s_a$ nor $s_b$, $c(a) = j$ and $c(b) = k$, we have leaves $v_{j'} \in s_a \cap s_x$ , $v_{k'} \in s_b \cap s_x$ which form at least two intercepting maximal stars, $s_{a'} = \{v_{j'}\}\{v_i, v_j, \dots\}$ and $s_{b'} = \{v_{k'}\}\{v_i, v_k, \dots\}$, such that $s_{a'} \cap s_a \cap s_x \neq \emptyset$ and $s_{b'} \cap s_b \cap s_x \neq \emptyset$.
    \end{enumerate}
    
    These cases allow us to conclude that $A,B$ remains connected no matter the configuration of the intersection of the corresponding stars in $H$.
    Consequently, $x$ cannot exist and we complete the proof.
\end{tproof}

\begin{theorem}
    Every edge of a connected star graph $G$ is contained in at least one triangle if $|V(G)| \geq 3$.
\end{theorem}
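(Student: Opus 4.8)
The plan is to carry out the whole argument inside the pre-image $H$. An edge $ab \in E(G)$ corresponds to two \emph{distinct} maximal stars $s_a, s_b \in \str(H)$ with $s_a \cap s_b \neq \emptyset$, so it suffices to exhibit a third maximal star $s_c$, distinct from both, that meets each of $s_a$ and $s_b$; then $\{a,b,c\}$ induces a triangle in $G$. Writing $u = c(s_a)$ and $v = c(s_b)$, I would split the analysis according to the relationship between the two centers, mirroring the three intersection possibilities listed after Definition~\ref{def:star_graph}. The recurring device will be to locate an edge $xy$ of $H$ with $x \in s_a \setminus s_b$ and $y \in s_b \setminus s_a$ (or to place the center of $s_c$ off both stars): then any maximal star containing that edge automatically meets $s_a$ at $x$ and $s_b$ at $y$ while being forced distinct from both, since it contains $y \notin s_a$ and $x \notin s_b$.

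The two easy regimes come first. When $u = v$ (type (i)), the leaf sets of $s_a$ and $s_b$ are distinct maximal independent sets of $H[N(u)]$; I would pick a leaf $p$ of $s_a$ that is not in $s_b$, and maximality of the leaf set of $s_b$ yields an adjacent leaf $q$ of $s_b$, with $q \notin s_a$ and $p \notin s_b$ by independence, so a maximal star through $pq$ is the desired $s_c$. When $u \neq v$ and $uv \notin E(H)$, the shared vertex cannot be $u$ or $v$ (either would force the centers adjacent), hence it is a common leaf $w$; since $u,v$ are non-adjacent, $\{w\}\{u,v\}$ is an induced star that extends to a maximal star $s_c$ centered at $w$, meeting $s_a$ at $u$ and $s_b$ at $v$ and differing from both in its center.

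The delicate case, and the one I expect to be the main obstacle, is $u \neq v$ with $uv \in E(H)$, which is exactly the situation where a center can be a leaf of the other star (type (ii)). Here I would branch on whether $v$ is a leaf of $s_a$ and whether $u$ is a leaf of $s_b$. If, say, $v$ is a leaf of $s_a$ but $u$ is not a leaf of $s_b$, then maximality of the leaf set of $s_b$ in $H[N(v)]$ produces a leaf $q$ of $s_b$ adjacent to $u$, and a maximal star through $uq$ works because $u \notin s_b$ and $q \notin s_a$ (the latter since $q$ is adjacent to the leaf $v$ of $s_a$). The genuinely degenerate configuration is when each center is a leaf of the other star: there I would use a leaf $p$ of $s_a$ distinct from $v$ (and symmetrically a leaf of $s_b$ distinct from $u$), noting $p \notin s_b$, and build $s_c$ as a maximal star centered at $p$ with $u$ as a leaf, which is distinct from $s_a$ and $s_b$ by its center and by containing $p$.

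The last corner case is where both stars reduce to the single edge $\{u,v\}$, so that no extra leaf is available; I would dispose of it by observing that then $s_a$ and $s_b$ are the \emph{same} maximal star, contradicting $a \neq b$. The portion requiring the most care is precisely this adjacent-center analysis, where the bookkeeping of which leaf belongs to which star — enforced throughout by independence of the leaf sets — determines whether the constructed $s_c$ genuinely differs from $s_a$ and $s_b$; the rest of the argument is then routine.
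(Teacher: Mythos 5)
Most of your case analysis is sound, but the sub-case you yourself flag as ``genuinely degenerate'' --- each center a leaf of the other star --- contains a real gap, and it is exactly the place where the theorem's hypotheses must enter. Your construction asserts the existence of a maximal star $s_c$ \emph{centered at} $p$ with $u$ as a leaf. However, an induced star need not extend to a maximal induced star with the same center: since $\{p\}\{u\}$ has only one leaf, its maximal extensions can flip the center to $u$. A maximal star centered at $p$ containing $u$ exists only if $p$ has a neighbor non-adjacent to $u$; if $N_H(p) \subseteq N_H[u]$, every maximal star containing the edge $pu$ is centered at $u$, and nothing rules out that $s_a$ is the only such star. Concretely, take $H = P_4$ with vertices $p, u, v, p'$ in path order. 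Then $s_a = \{u\}\{p,v\}$ and $s_b = \{v\}\{u,p'\}$ are the \emph{only} maximal stars of $H$; you are squarely in this sub-case (each center is a leaf of the other, your $p$ exists, $p \notin s_b$, and neither star is a single edge, so your corner case does not catch it), yet no third star exists at all --- correspondingly $G = K_2$ and its unique edge lies in no triangle.

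This failure is structural, not cosmetic: your argument never uses that $G$ is connected with $|V(G)| \geq 3$, and the $P_4$ example shows the claim you are proving (``every edge of a star graph lies in a triangle'') is false without those hypotheses, so no local patch of this sub-case can work. This is precisely where the paper's proof differs from yours: it proceeds by contradiction, uses connectivity and $|V(G)| \geq 4$ to obtain a vertex $x$ adjacent to $a$ but not to $b$, and then runs the intersection analysis on the \emph{three} stars $s_a, s_b, s_x$, producing a fourth star meeting the required pairs; to repair your proof you would have to import such a third star in the degenerate sub-case and argue with it. A separate, minor omission: your branching on whether $v$ is a leaf of $s_a$ and $u$ is a leaf of $s_b$ silently drops the case where neither holds; that one is harmless, since then $u \in s_a \setminus s_b$ and $v \in s_b \setminus s_a$, and your own recurring device applied to the edge $uv$ finishes it. The coincident-center case, the non-adjacent-center case, and the ``exactly one center is a leaf of the other'' case are correct as written.
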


\begin{tproof}
    The only connected star graph with 3 vertices is $K_3$, so take $G$ with $|V(G)| \geq 4$.
    Take a pre-image $H$ of $G$, $ab \in E(G)$, $s_a, s_b \in \str(H)$ the corresponding stars to $a, b$, and assume that $ab$ is not contained in any triangle of $G$.
    Since $G$ is connected, there is at least one $x \in V(G)$ adjacent to (w.l.o.g) $a$, but not to $b$, and a corresponding maximal star $s_x$ of $H$.
    Below, we analyze the possible intersections between $s_a$ and $s_b$ and conclude that there is always some star $s_y$ that shares one vertex with $s_a$ and $s_b$.
    \begin{enumerate}
        \item If $c(a) = c(b) = i$ and the center of $s_x$ is a leaf of $s_a$, clearly $v_i$ is not a leaf of $s_x$, otherwise $s_x \cap s_b \neq \emptyset$, therefore there is some leaf $v_j \in s_x$ with $v_iv_j \in E(H)$, which must be part of at least one maximal star $s_y \neq s_x$ of $H$, from which we conclude that $s_a \cap s_b \cap s_y \neq \emptyset$, $s_a \cap s_x \cap s_y \neq \emptyset$ and both $ab$ and $ax$ are in a triangle of $G$.
        \item If $c(a) = c(b) = i$ and a leaf $v_j$ of $s_x$ is a leaf of $s_a$, either the center $v_k$ of $s_x$ is adjacent to $v_i$, in which case $v_iv_k \in E(H)$ and we follow the same argument as in the previous case, or they are not adjacent, implying that there is a maximal star $s_y = \{v_j\}\{v_i, v_k, \dots\}$ which intercepts $s_a, s_b, s_x$, which allows us to conclude that $s_a, s_b, s_x, s_y$ is a clique of $G$.
        \item if $i = c(a) \neq c(b) = k$, there is some leaf $v_j \in s_a \cap s_b$. Clearly, if $v_iv_k \in E(H)$, there is a star that intercepts both $s_a$ and $s_b$;
        otherwise, $v_iv_k \notin E(H)$ and we conclude that $s_y = \{v_j\}\{v_i, v_k, \dots\}$ intercepts $s_a$ and $s_b$ and creates a triangle that contains $ab$.
    \end{enumerate}
\end{tproof}

The previous theorem implies that the minimum degree of any connected star graph on at least three vertices is at least two.
A natural question arises about the vertices of degree two and the structures on the pre-image that generate them.

A \textit{pending-$P_4$} $\{u, v, w, z\}$ is an induced path on four vertices that satisfies $d(u) = 1$, $N(v) = \{u, w\}$, $N(w) = \{v, z\}$ and $N(z)$ is an independent set of $H$.
A \textit{terminal triangle} is a set $\{u, v, z\}$ such that $N[u] = N[v] = \{u,v,z\}$, and no other pair of vertices in $N(z)$ is adjacent.
In both cases, $z$ is called the \textit{anchor} of the structure.
Our next result shows that for nearly all star graphs, their degree two vertices are either generated by pending-$P_4$'s or terminal triangles.

\begin{lemma}
    If $H$ is star-critical, $G = \K{S}(H)$ is connected, and $G$ is not isomorphic to a diamond, then every vertex of degree two of $G$ is generated by a pending-$P_4$,
    or by a terminal triangle.
    Moreover, for every degree two vertex $a \in V(G)$, it holds that $a$ has a neighbor $a'$ which is not adjacent to another vertex of degree two.
\end{lemma}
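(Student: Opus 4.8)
The plan is to fix a vertex $a \in V(G)$ with $\deg_G(a) = 2$, write $s_a \in \str(H)$ for its star with centre $c$ and leaf set $L$, and let $b, d$ be the two neighbours of $a$, with stars $s_b, s_d$. First I would invoke the earlier theorem that every edge of a connected star graph lies in a triangle: applied to $ab$, the only possible apex is $d$, since $N_G(a) = \{b, d\}$, so $b \sim d$ and the three stars $s_a, s_b, s_d$ pairwise intersect. Consequently every vertex of $s_a$ lies in no maximal star other than $s_a, s_b, s_d$, which confines the whole analysis to the local structure of $s_a$ inside $H$.

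Next I would classify each leaf $l \in L$, which, being non-adjacent to the other leaves, is either \emph{pendant}, with $c$ as its unique neighbour, or \emph{connected}, having a neighbour outside $s_a$. Star-criticality forbids two pendant leaves at once: they would be non-adjacent simplicial vertices sharing the neighbourhood $\{c\}$, so one of them would be non-star-critical, as observed earlier. I would then show that a connected leaf is costly: its external neighbour $x$ forces at least one further maximal star meeting $s_a$ (centred at $l$ when $c \not\sim x$, and also one centred at $x$ unless $x$ is pendant), and distinct connected leaves yield distinct such stars. Dually, whenever $N(c)$ supports more than one maximal induced star at $c$, each extra one is a further neighbour of $a$. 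Measuring all of these intersections against the budget $\deg_G(a) = 2$ is what constrains $N(c)$ and the number of connected leaves, and hence pins down the global shape of $s_a$.

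This produces a dichotomy on whether $s_a$ is a single edge or has at least two leaves. If $|L| \ge 2$, the pendant bound and the degree budget force exactly one pendant leaf $u$ and one connected leaf $w$, force $c$ to have no neighbour beyond $u$ and $w$ (so $N(c)$ is independent and $s_a$ is the only star at $c$), and force $w$ to have a single relevant external neighbour $z$ with $N(z)$ independent; this is precisely a pending-$P_4$ with $v = c$ and anchor $z$. If instead $s_a = \{c\}\{l\}$ is a single edge, then $c$ and $l$ are true twins, since a single edge is a maximal star exactly when its endpoints are true twins, and examining their common neighbourhood $X = N(c) \setminus \{l\}$ under the degree-two constraint collapses $X$ to a single anchor $z$ adjacent to both $c$ and $l$, with no other adjacent pair inside $N(z)$ — a terminal triangle $\{c, l, z\}$. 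In each branch it is star-criticality that discards the reducible configurations, such as redundant common neighbours or an external vertex whose deletion would leave $G$ unchanged.

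For the \emph{moreover} clause I would examine the two neighbours $b, d$ of $a$ and argue that the one on the anchor side is adjacent to no degree-two vertex besides $a$: the anchor's independent neighbourhood (which a short count shows must have at least two vertices, or else $\deg_G(a)$ would drop to one) forces that star and those around it to have strictly larger degree, so the only way to trap both neighbours of $a$ next to a second degree-two vertex is the tightly knit configuration realised by the diamond, which is why it is excluded; the smallest graphs are verified directly. I expect the principal obstacle to be the true-twin subtleties of single-edge stars: since such a star is maximal only when its two vertices are true twins, many of the ``this star does (not) exist'' steps depend on carefully tracking when a size-one leaf set extends, and on repeatedly using criticality to rule out every non-minimal arrangement.
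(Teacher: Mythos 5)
Your proposal follows essentially the same route as the paper: confine every vertex of $s_a$ to the three stars $s_a, s_b, s_d$ (the degree-two budget), split on whether $s_a$ is a single edge or has at least two leaves, drive the first case through true twins to a terminal triangle and the second to a pending-$P_4$, and settle the \emph{moreover} clause by a case analysis in which the diamond is the extremal obstruction. The single-edge branch and the reduction of the \emph{moreover} clause to the ``two degree-two vertices with the same neighbourhood'' situation are faithful compressions of the paper's argument.

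There is, however, a genuine gap in the branch with $|L|\ge 2$. You assert that ``the pendant bound and the degree budget force exactly one pendant leaf $u$ and one connected leaf $w$,'' but these two ingredients do not force this. Concretely, let $H=P_5$ with vertices $x\!-\!u\!-\!v\!-\!w\!-\!y$: $H$ is star-critical, its star graph is $K_3$ (connected, not a diamond), and the middle star $\{v\}\{u,w\}$ has degree two while \emph{both} of its leaves are connected and none is pendant; your pendant bound ($\le 1$ pendant leaf) and your budget ($\deg=2$, exactly met by the stars centred at $u$ and at $w$) are both satisfied, yet there is no pending-$P_4$ or terminal triangle generating this vertex. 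Excluding this configuration requires first assuming $|V(G)|\ge 4$ (with small $G$ handled separately, as the paper does at the outset) and then a connectivity argument: a fourth star must exist, must be reachable from $s_a$, and any such star is forced to touch $u$, $v$ or $w$, violating the budget. The same missing ingredient ($|V(G)|\ge 4$ plus connectedness of $G$) is what the paper invokes inside the delicate subcase where a leaf's external neighbour is adjacent to the centre (an edge of $N_H(v)$ incident to a leaf), which your count also does not resolve on its own. Two further points your sketch glosses over: the diamond exclusion is needed already in this branch, not only in the \emph{moreover} clause (when $N_H(w)\setminus\{u,v\}$ contains an edge whose endpoints have no further neighbours, $G$ is precisely the diamond); and the configuration of two connected leaves plus one pendant leaf also passes both of your tests, being eliminated by a different criticality argument (that pendant leaf is non-star-critical because its removal leaves the star maximal, unabsorbed, and with unchanged intersections) rather than by the shared-neighbourhood twin observation you cite.
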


\begin{proof}
    If $|V(G)| \leq 3$ the result holds, so suppose $|V(G)| \geq 4$, let $a$ be a degree two vertex of $G$ with $N(a) = \{b,d\}$, $s_a$ be the corresponding maximal star of $H$, $c(s_a) = v$, and $u \in s_a$ be one of its leaves.
    Since $d_G(a) = 2$, neither $v$ nor any of its leaves can be contained in any other maximal star of $H$, aside possibly from $b$ and $d$.
    
    Suppose that $s_a = \{u,v\}$.
    In this case, we have that both $u$ and $v$ are true twins, with $w \in N_H(u)$.
    If $u$ is simplicial, $|N_H(u)| = 2$, otherwise $a$ would have more than two neighbors.
    If $N_H(u) \setminus \{v\}$ is not an independent set, it has at least two adjacent vertices $w,z$ forming a $K_4$ with $u$ and $v$; regardless of the neighborhood of $w$ and $z$, at least four distinct maximal stars contain either $u$ or $v$, implying $d_G(a) \geq 3$.
    If $N_H(u) \setminus \{v\}$ is an independent set, we have two options:
    \begin{enumerate}
        \item $N_H(u) \supseteq \{v,w,z\}$, in which case at least one of $w$ or $z$, say $w$, has a neighbor other than $u$ and $v$, since $H$ is star-critical. This configuration, however, generates a $K_5$ in $G$: two stars centered at $w$, $s_a$, one centered at $v$ containing all its neighbors, except $u$, and one centered at $u$ with all its neighbors except $v$.
        \item Otherwise, $N_H(u) = \{v,w\}$. Since $|V(G)| \geq 4$, $w$ necessarily has an additional neighbor. If $N_H(w) \setminus \{u,v\}$ is not an independent set of $H$, $w$ has a pair of adjacent neighbors $x,y$, which are not adjacent to $u$ nor $v$.
        However, note that there are at least four stars centered at $w$ that intersect $s_a$, contradicting the hypothesis that $a$ has only two neighbors in $G$.
   \end{enumerate}
   From the above, we conclude that if $|s_a| = 2$, it corresponds to an edge of a terminal triangle.
   
   On the other hand, suppose now that $s_a \supseteq \{u,v,w\}$, and that $c(s_a) = v$.
   Towards showing that $N_H(v)$ is an independent set, suppose that $v$ has at least one edge in its neighborhood.
   \begin{enumerate}\addtocounter{enumi}{2}
       \item If no such edge is incident to $u$ or $w$, then there are two maximal stars centered at $v$ containing $\{u,v,w\}$ but, in this case, neither $u$ nor $w$ may have a star centered at it and, consequently, one of them is non-star-critical. 
       \item If $w$ is adjacent to some $z \in N_H(v)$ but $zu \notin E(H)$, again there are two stars centered at $v$ ($s_a$ and another one containing $\{u,v,z\}$) both being adjacent to any star that includes the edge $wz$.
       For $s_a$ to have only two neighbors, neither $u$, nor $v,$ nor $w$ may be in another star.
       Since $G$ is connected and has at least four vertices, $z$ must have another neighbor $x$.
       We subdivide our analysis on the neighborhood of $x$:
       \begin{enumerate}
            \item If $xv \in E(H)$, either $x$ or $z$ must be part of another maximal star; actually, $x$ cannot be the center of another star (note that $x$ is part of $s_a$, or it would be in another star that intersects $s_a$), so $z$ must be part of another star, that is, it has a neighbor $y$ not adjacent to $x$; but, in this case, $\{z,x,y\}$ intersects $s_a$, increasing the degree of $a$ to at least three.
            \item So $xv \notin E(H)$ and there is a star centered at $z$ containing $\{v,z,x\}$ which does not contain $w$, this implies that $s_a$ intersects at least three stars.
       \end{enumerate}
       \item If $z$ is adjacent to both $u$ and $w$, $s_a$ already intersects two maximal stars -- one containing $vz$ and another containing $\{u,z,w\}$.
       Note that neither $w$ nor $u$ may have another neighbor, as that would inevitably generate a third star that intersects $s_a$.
       The only possibility would be that $z$ is part of a maximal star that does not contain neither $u$, nor $v$, nor $w$.
       That is, every star that contains $z$ has it as one of its leaves (otherwise we would have leaves adjacent to $u$, $v$, and $w$).
       This implies that $N_H(z) \setminus \{u,v,w\}$ is an independent set.
       However, either $u$ or $w$ is non-star-critical, since its removal does not change the intersection graph, a contradiction.
   \end{enumerate}
   To realize that $N_H(v) = \{u,w\}$, note that at most two of the neighbors of $v$ may have a single star centered at each of them, all others would be of degree one and, consequently, non-star-critical.
   
   We now show that one of the neighbors of $v$ has degree one.
   To see that this is the case, note that, if neither has degree one, both have at least one neighbor not adjacent to $v$ and, thus, centers of maximal stars containing $v$.
   However, neither may be in \textit{any} other star, as this would increase the degree of $s_a$ to more than two, but this is impossible, since at least one of $u$ and $w$ must be in another star for $G$ to have at least four vertices and remain connected.
   For the remainder of the proof, suppose that $u$ has degree one.
   Together with the fact that $v$ only has two neighbors, we conclude that $w$ must be in precisely two maximal stars, one of them with $w$ being its center, since no neighbor of $w$ may be adjacent to $v$.
   This implies that $N_H(w)$ is either an independent set or that it has at most one edge.
   If $N_H(w)$ has an edge $xy$, however, neither $x$ nor $y$ may have a neighbor not adjacent to $w$, otherwise we would have another star containing $w$ and $s_a$ would intersect three stars.
   In this case, $G$ would be precisely a diamond.
   So now we have that $N_H(w)$ is also an independent set and, furthermore, $N_H(w) = \{z, v\}$, since $H$ is star-critical.
   Now, the only way for $w$ to be in more than two stars is if there is more than one star centered at $z$ containing $w$; which is possible only if $z$ is part of a triangle; so we also conclude that $N(z)$ is an independent set.
   This configuration is precisely a pending-$P_4$.
   
   To show that every vertex $a \in V(G)$ with exactly two neighbors $b,d$ has a neighbor not adjacent to another vertex of degree two, suppose that $a$ was generated by a terminal triangle.
   In this case, the two neighbors are stars centered at the anchor of the triangle; however, any star that intersects $s_b$ must necessarily intersect $s_d$, since the symmetric difference between them is precisely the vertices of $s_a$.
   Thus, since $G$ is not a diamond, no degree two vertex may be adjacent to only one of $b$ or $d$.
   On the other hand, if $a$ was generated by a pending-$P_4$, one of its neighbors, say $b$, is not centered at the anchor of the structure; moreover any neighbor of $b$ must also be adjacent to $d$.
   Thus, regardless of the structure that generated $a$, either a degree two vertex touches both $b$ and $d$, or at least one of them is not adjacent to another vertex of degree two.
   Towards a contradiction, suppose that there is some $a' \in V(G)$ of degree two satisfying $N(a) = N(a')$.
   We have three possible cases:
   \begin{enumerate}\addtocounter{enumi}{5}
       \item If both $a$ and $a'$ belong to pending-$P_4$'s. Note that, if $s_a$ is generated by the $P_4$ $\{u,v,w,z\}$, we have that $s_{a'}$ must be formed by the $P_4$ $\{u',v', z, w\}$, since $s_{a'}$ must intersect the star centered at $z$ and the star centered at $w$.
       However, this implies that $H$ is isomorphic to $P_6$, since the degree of every vertex, except $u$ and $u'$, is two, and we have that $G$ is a diamond, contradicting the hypothesis.
       \item If $s_a$ belongs to a pending-$P_4$ $\{u,v,w,z\}$ and $s_{a'}$ belongs to a terminal triangle, the anchor $z$ of the pending-$P_4$ cannot be the same as the anchor of the terminal triangle, as it would violate the requirement that $N(z)$ is an independent set.
       This, however, makes it impossible for $a'$ to be adjacent to the neighborhood of $a$.
       \item If both stars belong to terminal triangles, a similar analysis as the previous case follows.
   \end{enumerate}
   Finally, we conclude that at most one of the neighbors of a degree two vertex has another degree two neighbor.
\end{proof}

\begin{lemma}
    Let $E_2(G) = \{uv \in E(G) \mid d_G(u) = 2\ \text{or}\ d_G(v) = 2\}$ be the set of edges incident to at least one degree two vertex of the star graph $G$. Unless $G$ is isomorphic to a diamond or a triangle $E_2(G) \leq \min\left\{|V(G)| - 1, \frac{4}{7}|E(G)|\right\}$.
    The bound is tight.
\end{lemma}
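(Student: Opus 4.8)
The plan is to reduce the whole statement to the combinatorics of the subgraph induced by the degree two vertices. Write $D_2$ for the set of degree two vertices of $G$ and $O = V(G)\setminus D_2$, and recall from the previous lemma that (after discarding the diamond, the triangle, and the trivial cases $|V(G)|\le 3$) every $a\in D_2$ is generated by a pending-$P_4$ or a terminal triangle, its two neighbours are adjacent — since, by the theorem that every edge of a star graph lies in a triangle, the edge $ab$ and $ad$ must each close into a triangle — at most one of its neighbours has another degree two neighbour, and no two degree two vertices have the same neighbourhood. My first step is to pin down $G[D_2]$: it has maximum degree at most two, so its components are paths; a path on at least three vertices would contain an edge $a_ia_{i+1}$ both of whose endpoints have all their neighbours inside the path, and such an edge cannot lie in a triangle, contradicting the triangle theorem. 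Hence every component of $G[D_2]$ is an isolated vertex ($\ell=1$) or an isolated edge ($\ell=2$); let $c_1,c_2$ count them and put $c=c_1+c_2$. As $G[D_2]$ is then a forest, summing the degrees of the degree two vertices gives the exact identity $|E_2(G)| = 2|D_2| - |E(G[D_2])| = |D_2| + c = 2c_1 + 3c_2$, which is the workhorse for both bounds.

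For $|E_2(G)|\le |V(G)|-1$ I would build an injection $r$ from the components of $G[D_2]$ into $O$ and then exhibit one element of $O$ outside its image. For an $\ell=1$ component $\{a\}$ let $r(\{a\})=f(a)$ be the owned neighbour from the previous lemma (it has exactly one degree two neighbour, namely $a$); for an $\ell=2$ component, whose edge forces a common neighbour $w\in O$ by the triangle theorem, let $r(P)=w$, which corresponds to the anchor of the generating terminal triangle. Injectivity follows because distinct owned neighbours are distinct (the map $f$ is injective), distinct $\ell=2$ apices come from distinct anchors, and an owned neighbour (one degree two neighbour) is never an apex (at least two degree two neighbours). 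This already gives $c\le |O|$, hence $|E_2(G)|\le |V(G)|$. To gain the final unit I would use connectivity: the second neighbour $d_a$ of an $\ell=1$ vertex $a$ lies in $O$, and since $a$ has no degree two neighbour, every degree two vertex adjacent to $d_a$ other than $a$ is non-adjacent to $a$; thus $d_a$ is neither an apex nor the owned representative of another component, so it escapes the image of $r$. If there are no $\ell=1$ components one argues analogously that the apices cannot form a closed system, using that $G$ has no cut vertex (Theorem~\ref{thm:star_cutless}). This is where I expect to spend the most care.

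For $|E_2(G)|\le \tfrac{4}{7}|E(G)|$ I would pass to $e_O = |E(G[O])| = |E(G)| - |E_2(G)|$ and show $e_O \ge \tfrac34|E_2(G)|$, which is equivalent to the claimed inequality. The input is that every $O$-vertex has degree at least three and sits on forced triangles: each $\ell=1$ component contributes its private triangle edge $bd\in E(G[O])$ (private because two degree two vertices cannot share a neighbourhood), while the endpoints $b,d$ of degree at least three, together with the apices of the $\ell=2$ triangles, must spend further edges inside $O$. The natural tool is a discharging argument: give every edge of $E_2(G)$ a unit charge and redistribute it to nearby edges of $E(G[O])$ — the triangle edge of its component and the surplus edges forced at the non-degree-two endpoints by their degree lower bound and by star-criticality — so that each $O$-edge receives at most $\tfrac43$. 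The constant $\tfrac47$ would then drop out of the single worst local configuration, which I expect to be a block of $\ell=2$ triangles (four of them contributing $12$ edges to $E_2$ against $9$ edges inside $O$, for a total of $21 = 7\cdot 3$). The main obstacle of the whole lemma is precisely this accounting: one must keep the forced $O$-edges from being double counted across neighbouring gadgets and verify that the degree and criticality constraints really produce the claimed surplus in every case.

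Finally, tightness: the $|V(G)|-1$ bound is attained by the star graph of a spider with legs of length three (a clique on the non-degree-two vertices with one pendant triangle per leg, so $|D_2|=c$ and every non-degree-two vertex but the hub is an owned neighbour), and the $\tfrac47|E(G)|$ bound is attained by gluing the worst local gadget identified in the discharging step into a connected star graph.
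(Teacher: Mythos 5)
The central claim of the lemma --- the $\frac{4}{7}|E(G)|$ bound --- is not actually proved in your proposal, and this is a genuine gap rather than a routine omission. You correctly reduce it to showing $|E(G[O])|\ge\frac{3}{4}|E_2(G)|$, but then you only \emph{announce} a discharging scheme, guess a worst local configuration, and concede that the accounting needed to avoid double counting ``is the main obstacle'': that accounting is the entire content of this half of the lemma. The paper closes it with a short direct count you could have finished with the tools you already assembled: the owned (exclusive) neighbour $f(a)$ of each degree two vertex $a$ must have at least two edges besides $af(a)$, since if its only other edge went to the second neighbour $g(a)$ of $a$, then $g(a)$ would be a cut-vertex, contradicting Theorem~\ref{thm:star_cutless}; every such edge lies in $E(G[O])$ because $f(a)$ is adjacent to no other degree two vertex; the triangle edges $f(a)g(a)$ are pairwise distinct over all $a$ and never coincide with another owned neighbour's edges (no $g(a)$ can itself be an owned neighbour), while the remaining surplus edges can be shared by at most two owned neighbours. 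This gives $|E(G)|\ge |E_2(G)|+\frac{1}{2}|E_2(G)|+\frac{1}{4}|E_2(G)|=\frac{7}{4}|E_2(G)|$, which is exactly the paper's proof.

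There is a second, concrete error: the configuration underlying both your worst-case analysis and your second tightness claim cannot occur in a star graph. If two degree two vertices $a,a'$ were adjacent then, by the previous lemma (used exactly as you use it), $a$ arises from a terminal triangle or a pending-$P_4$ and $a'$ is one of the two stars meeting $s_a$; forcing $a'$ to also have degree two forces every remaining neighbour of the anchor in the pre-image to be pendant (any non-pendant one would center a maximal star giving $a'$ a third neighbour), whence the pre-image has exactly three maximal stars and $G\simeq K_3$, which is excluded. So $G[D_2]$ is in fact an independent set: your $\ell=2$ components are vacuous, the ``block of four $\ell=2$ triangles'' is not a star graph, and ``gluing the worst local gadget'' cannot certify tightness of $\frac{4}{7}|E(G)|$. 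The same vacuity is all that rescues two weak steps in your first bound, namely the inference that $d_a$ is not an apex (which does not follow from the sentence you wrote) and the all-$\ell=2$ case that you explicitly defer. For tightness the paper needs a single example, the gem $=\K{S}(P_7)$, which attains both bounds simultaneously ($|E_2|=4=|V|-1=\frac{4}{7}\cdot 7$); your spider family is a correct witness for the $|V(G)|-1$ bound, but tightness of the $\frac{4}{7}|E(G)|$ bound is left unsupported in your write-up.
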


\begin{proof}
    Let $V_2(G) = \{v \in V(G) \mid d_G(v) = 2\}$.
    By the previous lemma, we have that for each vertex of degree two there is another vertex non-adjacent to another degree two vertex.
    As such, each pair of edges of $E_2(G)$ with a common endpoint is in one-to-one correspondence with a degree two vertex and its exclusive neighbor, i.e., $|E_2(G)| \leq 2|V_2(G)| \leq |V(G)| - 1$.
    For the second case, for each degree two vertex $v$, its exclusive neighbor has at least two other edges, otherwise the non-exclusive neighbor would be a cut-vertex, but these edges may be between exclusive neighbors.
    As such, we have that $|E_2(G)| + \frac{1}{2}|E_2(G)| + \frac{1}{4}|E_2(G)| \leq |E(G)|$, implying $|E_2(G)| \leq \frac{4}{7}|E(G)|$.
    For the tightness of the bounds, the star graph of $P_7$, the gem, satisfies both conditions.
\end{proof}

We conclude this section with a result about the diameter of a star graph.
In fact, when considering the iterated star operator, it appears that the diameter converges to either three or four, depending on the graph from which the process began, even though the sequence formed by the iterated star graphs itself does not seem to converge.
We highlight that the bound of Theorem~\ref{thm:diameter} is tight, as shown by the example of Figure~\ref{fig:diameter}.

\begin{figure}[!htb]
        \centering
        \begin{tikzpicture}[rotate = 0]
                %\draw[help lines] (-5,-5) grid (5,5);
            \GraphInit[unit=3,vstyle=Normal]
            \SetVertexNormal[Shape=circle, FillColor = black, MinSize=3pt]
            \tikzset{VertexStyle/.append style = {inner sep = \inners, outer sep = \outers}}
            \SetVertexNoLabel
            \begin{scope}
                \begin{scope}[shift={(-1.5, 0)}]
                    \grComplete[RA=1, prefix=a]{3}
                \end{scope}
                \begin{scope}[shift={(1.5, 0)}]
                    \begin{scope}[rotate=180]
                        \grComplete[RA=1, prefix=b]{3}
                    \end{scope}
                \end{scope}
                \Edge(b0)(a0)
            \end{scope}
            
            \begin{scope}[shift={(5,0)}]
                    \begin{scope}[rotate=-135]
                        \grComplete[RA=1, prefix=c]{4}
                    \end{scope}
                    \Vertex[a=0, d=1.5]{y1}
                    \Vertex[a=180, d=1.5]{y2}
                    \Edge(y2)(c0)
                    \Edge(y1)(c1)
                    \Edge(y1)(c2)
                    \Edge(y2)(c3)
            \end{scope}
                
        \end{tikzpicture}
        \caption{Problematic case of Theorem~\ref{thm:diameter}. The pre-image on the left and star graph on the right.\label{fig:diameter}}
\end{figure}
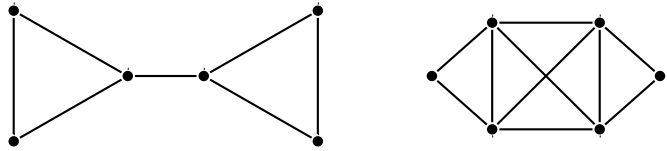

\begin{theorem}
    \label{thm:diameter}
    If $H$ is a graph with diameter $k$ and its star graph $G$ is not a clique, then it holds that the diameter of $G$ is at most $\floor{\frac{k}{2}} + 2$. 
\end{theorem}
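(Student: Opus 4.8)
The plan is to bound the distance in $G$ between the two vertices $a, b$ that realize the diameter of $G$, expressed in terms of the distance in $H$ between the closest pair of vertices belonging to their corresponding stars $s_a, s_b \in \str(H)$. The hypothesis that $G$ is not a clique guarantees that $a, b$ can be taken non-adjacent (so $s_a \cap s_b = \emptyset$) and that the diameter of $G$ is at least two; when $G$ is a clique the bound holds trivially, since a clique has diameter one.

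First I would choose $x \in s_a$ and $y \in s_b$ minimizing $\dist_H(x,y) =: d$, so that $d \leq k$ since $k$ is the diameter of $H$, and fix a geodesic $x = p_0, p_1, \ldots, p_d = y$ in $H$. The base cases are immediate: if $d = 0$ the stars intersect and $ab \in E(G)$, while if $d = 1$ the edge $p_0p_1$ lies in some maximal star $t$ that meets both $s_a$ and $s_b$, giving $\dist_G(a,b) \leq 2 = \floor{1/2} + 2$.

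The core of the argument is a leapfrogging construction along the geodesic. For each odd index $i$ with $1 \leq i \leq d-1$, the triple $p_{i-1}, p_i, p_{i+1}$ is an induced $P_3$ centered at $p_i$ (its endpoints lie at distance two on a shortest path, hence are non-adjacent), so it extends to a maximal star $t_i = \{p_i\}\{p_{i-1}, p_{i+1}, \ldots\}$. I would then observe that consecutive chosen stars $t_i$ and $t_{i+2}$ share the leaf $p_{i+1}$, so they are adjacent in $G$; that $s_a$ and $t_1$ share $p_0$; and that $s_b$ shares $p_d$ with the last chosen star. When $d$ is odd, the final vertex $p_d$ is not covered by the stars centered at odd interior vertices, so I would append one extra maximal star containing the edge $p_{d-1}p_d$, which meets both the previous leapfrog star (at $p_{d-1}$) and $s_b$ (at $p_d$).

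Counting the resulting walk $a, t_1, t_3, \ldots, b$ in $G$ then yields $\dist_G(a,b) \leq \floor{d/2} + 1$ when $d$ is even and $\dist_G(a,b) \leq \floor{d/2} + 2$ when $d$ is odd; in either case $\dist_G(a,b) \leq \floor{d/2} + 2 \leq \floor{k/2} + 2$, which bounds the diameter of $G$. The main obstacle is the bookkeeping at the two ends: getting the parity right so that the ``$+2$'' arises only through the extra star needed for an odd-length geodesic, and confirming that each leapfrog pair genuinely intersects in the prescribed even-position vertex while the attaching stars $s_a, s_b$ meet the first and last stars. A secondary point to verify is that the minimality of $d$ is legitimately available — any $x \in s_a$, $y \in s_b$ would drive the construction, but taking the closest pair is precisely what keeps the final bound at $\floor{k/2} + 2$ rather than something larger.
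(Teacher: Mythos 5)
Your proof is correct, and it reaches the bound by a more direct route than the paper. Both arguments rest on the same key fact --- maximal stars centered at every other vertex of a geodesic of $H$ pairwise intersect, so distances in $G$ are roughly half of distances in $H$ --- but the paper runs this idea inside a proof by contradiction: it fixes a diametrical path $s_1, \dots, s_{k_G+1}$ of $G$, passes to the centers $u = c(s_2)$ and $v = c(s_{k_G})$ of the \emph{second} and \emph{penultimate} stars on that path, and plays the $H$-distance $r$ between these centers against the assumed diameter of $G$. The anchoring at $s_2, s_{k_G}$ rather than at the endpoints is exactly to dodge degenerate endpoint stars, such as the two edge-stars in the tight example of Figure~\ref{fig:diameter}, whose centers sit too close in $H$ for the center-to-center distance to control the distance between the stars. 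Your construction never mentions centers at all: you anchor at vertices $x \in s_a$, $y \in s_b$, leapfrog along a geodesic, and obtain the explicit per-pair bound $\dist_G(a,b) \leq \floor{\dist_H(x,y)/2} + 2 \leq \floor{k/2} + 2$, so the subtlety the paper must work around simply does not arise; arguably your write-up is also more complete than the paper's rather terse sketch. Two minor remarks. First, the minimality of $d$ buys you nothing: any choice of $x, y$ satisfies $\dist_H(x,y) \leq k$, which is all the bound needs. Second, when extending the induced $P_3$ on $p_{i-1}, p_i, p_{i+1}$ to a maximal star $t_i$, it is worth stating why the extension keeps $p_i$ as center and $p_{i-1}, p_{i+1}$ as leaves: an induced star with at least two leaves has its center forced, so greedily adding vertices adjacent to $p_i$ and non-adjacent to the current leaves yields a maximal star still containing both $p_{i-1}$ and $p_{i+1}$; this is what legitimizes the claim $p_{i+1} \in t_i \cap t_{i+2}$.
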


\begin{proof}
    Let $P_G = \{s_1, \dots, s_{k+1}\}$ be a diametrical path of $G$. % and $P_G' = P_G \setminus \{s_1, s_{k+1}\}$.
    For the following argumentation, we need to guarantee that the endpoints of the path in $G$ have at least two vertices in a shortest path between their corresponding centers in the pre-image $H$.
    Note that, in the case presented in Figure~\ref{fig:diameter}, neither of the degree two vertices of the star graph satisfy the aforementioned condition.
    Let $u = c(s_2)$, $v = c(s_k)$, $P_H$ be a shortest path between $u, v \in V(H)$ of length $r$.
    If $r \geq 2k - 3$, we are done, as we would certainly have a path in $G$ between $u$ and $v$ of length at least $\ceil{\frac{2k-3}{2}} = k-2$ and, by adding stars $s_1$ and $s_{k+1}$, we would have a path of length at least $k$.
    Otherwise, $r < 2k - 3$, which directly implies that there is a path between $s_2$ and $s_k$ of length at most $k-3$, contradicting the hypothesis that $P_G$ is a diametrical path.
\end{proof}

\section{Small star graphs}

By the observations made in Section~\ref{sec:prelim}, star graphs and square graphs are quite similar, and even coincide under specific conditions, which is the case when the pre-image is triangle-free.
A natural question that thus arises is if the classes are actually the same.
To see that there are star graphs which are not square graphs, Figure~\ref{fig:star_not_square} presents a small example of such a graph.

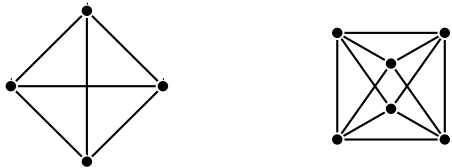
\begin{figure}[!htb]
        \centering
        \begin{tikzpicture}[rotate = 0]
                %\draw[help lines] (-5,-5) grid (5,5);
            \GraphInit[unit=3,vstyle=Normal]
            \SetVertexNormal[Shape=circle, FillColor = black, MinSize=3pt]
            \tikzset{VertexStyle/.append style = {inner sep = \inners, outer sep = \outers}}
            \SetVertexNoLabel
            \begin{scope}[shift={(-2, 0)}]
                \grComplete[RA=1]{4}
            \end{scope}
            \begin{scope}[shift={(2, 0)}]
                \begin{scope}[rotate=45]
                    \grCycle[RA=1, prefix=t]{4}
                \end{scope}
                \Vertex[x=0, y=-0.3]{x}
                \Vertex[x=0, y=0.3]{y}
                \Edge(t0)(y)
                \Edge(t1)(y)
                \Edge(t2)(y)
                \Edge(t3)(y)
                \Edge(t0)(x)
                \Edge(t1)(x)
                \Edge(t2)(x)
                \Edge(t3)(x)
            \end{scope}
                
        \end{tikzpicture}
        \caption{The star graph of $K_4$ is not a square graph.\label{fig:star_not_square}}
\end{figure}

Despite not coinciding for many classes of pre-images, it could be the case that every square graph also is a star graph, albeit for a different pre-image.
The smallest example we found of a square graph which is not a star graph is shown in Figure~\ref{fig:square_not_star}: the square of the net.
When attempting to show such a fact, without additional tools the combinatorial explosion of possible cases rapidly becomes intractable.
At the same time, testing all graphs up to the bound given by Theorem~\ref{thm:bound} in search of a pre-image would be completely unfeasible, as we would need to test all connected graphs with up to \textit{51 vertices}.
However, Theorem~\ref{thm:bound} presents what we believe is a very loose value for the size of a pre-image, a claim we support with Theorem~\ref{thm:monotonicity}, its corollary, and some experiments we performed.

\begin{figure}[!htb]
        \centering
        \begin{tikzpicture}[rotate = 0]
                %\draw[help lines] (-5,-5) grid (5,5);
            \GraphInit[unit=3,vstyle=Normal]
            \SetVertexNormal[Shape=circle, FillColor = black, MinSize=3pt]
            \tikzset{VertexStyle/.append style = {inner sep = \inners, outer sep = \outers}}
            \SetVertexNoLabel
            \begin{scope}[shift={(-2, 0)}]
                \begin{scope}[rotate=90]
                    \grComplete[RA=.7,prefix=c1]{3}
                    \grEmptyCycle[RA=1.6, prefix=o1]{3}
                    \foreach \i in {0,1,2} {
                        \Edge(c1\i)(o1\i)
                    }
                \end{scope}
            \end{scope}
            \begin{scope}[shift={(2, 0)}]
                \begin{scope}[rotate=90]
                    \grComplete[RA=.7,prefix=c2]{3}
                    \grEmptyCycle[RA=1.6, prefix=o2]{3}
                    \foreach \i in {0,1,2} {
                        \foreach \j in {0,1,2} {
                            \Edge(c2\i)(o2\j)
                        }
                    }
                \end{scope}
            \end{scope}
                
        \end{tikzpicture}
        \caption{The square of the net is not a star graph.\label{fig:square_not_star}}
\end{figure}
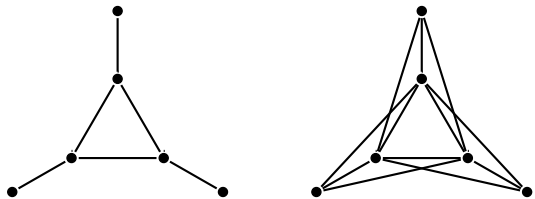

\begin{theorem}
    \label{thm:monotonicity}
    Let $H$ be an $n$-vertex graph with at least one non-star-critical vertex.
    For any $H'$ with $n+1$ vertices such that $H$ is an induced subgraph of $H'$, at least one of the following holds:
    \begin{enumerate}
        \item $H'$ has non-star-critical vertices; or
        \item $|\str(H')| \geq |\str(H)| + 1$.
    \end{enumerate}
\end{theorem}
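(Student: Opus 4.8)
The plan is to prove the contrapositive: assuming $H'$ is star-critical (so the first alternative fails), I would show $|\str(H')| \ge |\str(H)| + 1$, which is the second alternative. Write $H' = H + x$ for the new vertex. The first ingredient is a \emph{baseline monotonicity} statement, $|\str(H')| \ge |\str(H)|$. Partition $\str(H)$ into the stars $P$ that remain maximal in $H'$ and the stars $R$ that cease to be maximal. Since a star $s \in R$ is maximal in $H$, it can only be destroyed by $x$, and then $s \cup \{x\}$ is a star and is its \emph{unique} maximal extension in $H'$ (adding any vertex of $V(H)$ to $s \cup \{x\}$ would already extend $s$ inside $H$). Conversely, every maximal star of $H'$ avoiding $x$ is an induced star of $H$ that cannot be extended, hence lies in $P$. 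Thus $\str(H') = P \sqcup X$, where $X$ denotes the maximal stars containing $x$, and $s \mapsto s \cup \{x\}$ injects $R$ into $X$, giving $|\str(H')| = |P| + |X| \ge |P| + |R| = |\str(H)|$.

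Now suppose for contradiction that equality holds. Then the injection $R \to X$ is a bijection, so $\str(H') = P \sqcup \{s \cup \{x\} : s \in R\}$ and, in particular, no maximal star of $H'$ is centered at $x$. Define $\phi : \str(H) \to \str(H')$ by $\phi(s) = s$ on $P$ and $\phi(s) = s \cup \{x\}$ on $R$; this is a bijection. Because $\phi(s) \supseteq s$, the map $\phi$ never destroys an intersection, so it preserves every edge of $\K{S}(H)$; the only edges it can create are among $\{s \cup \{x\} : s \in R\}$, all of which share $x$. Hence $\phi$ is an isomorphism exactly when $R$ is a pairwise-intersecting family in $H$. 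If it is, then $\phi$ witnesses $\K{S}(H' \setminus x) = \K{S}(H) \simeq \K{S}(H')$, so $x$ is non-star-critical, contradicting that $H'$ is star-critical.

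It remains to treat the case where two destroyed stars $s_1, s_2 \in R$ are disjoint in $H$ (their centers in the extensions must then be adjacent, since otherwise $x$ would center a maximal star, which equality forbids, so $x$ is genuinely critical). Here I would invoke the hypothesis that $H$ has a non-star-critical vertex $w$, which by the earlier remark must be simplicial. Applying the baseline bound to $(H \setminus w) \hookrightarrow (H' \setminus w)$ and to $(H' \setminus w) \hookrightarrow H'$, together with $\K{S}(H \setminus w) \simeq \K{S}(H)$, forces the chain $|\str(H')| \ge |\str(H' \setminus w)| \ge |\str(H \setminus w)| = |\str(H)| = |\str(H')|$, so all four quantities coincide; in particular $|\str(H' \setminus w)| = |\str(H')|$. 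I would then argue that $w$ is still non-star-critical in $H'$: the redundancy that makes $w$ removable in $H$ survives the insertion of $x$, because any way in which $x$ could break that symmetry would create an extra maximal star through $x$, contradicting the equality $|X| = |R|$. This produces a non-star-critical vertex of $H'$, the desired contradiction, completing the proof of the second alternative.

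The hard part will be this last step, transferring the non-criticality of $w$ from $H$ to $H'$. Since ``non-star-critical'' is defined only up to isomorphism of star graphs, the equal counts $|\str(H' \setminus w)| = |\str(H')|$ are by themselves insufficient; one must exhibit an actual isomorphism $\K{S}(H' \setminus w) \simeq \K{S}(H')$. This demands a genuine structural description of \emph{why} a simplicial vertex fails to be star-critical, together with a careful check that adding $x$ cannot perturb that structure without raising the number of maximal stars. This is precisely the point at which the assumption that such a $w$ exists does its work, and I expect it to be the most delicate portion of the argument.
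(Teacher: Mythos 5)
Your baseline monotonicity step and the bijection $\phi$ are sound (indeed argued more carefully than in the paper), and your reduction to the case where two destroyed stars $s_1,s_2\in R$ are disjoint is fine, as is the chain forcing $|\str(H')| = |\str(H'\setminus w)| = |\str(H\setminus w)| = |\str(H)|$. But the proof stops exactly where the theorem's hypothesis has to do its work: you never show that $w$ is non-star-critical in $H'$; you only state that you ``would argue'' it, and you yourself concede that the equal counts cannot produce the needed isomorphism $\K{S}(H'\setminus w)\simeq \K{S}(H')$. Since in this case neither condition 1 nor condition 2 is established, the argument is incomplete, and the route you sketch for closing it (a structural description of why a simplicial vertex fails to be star-critical, plus a perturbation analysis under insertion of the new vertex) is left entirely open. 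This is a genuine gap, not a routine verification.

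The paper closes this case by a different and much lighter device: rather than transferring $w$'s non-criticality from $H$ to $H'$, it assumes for contradiction that $w$ \emph{is} star-critical in $H'$ and splits on the two concrete witnesses of criticality of a simplicial vertex. If removing $w$ from $H'$ causes a star to be absorbed, then $|\str(H'\setminus w)| < |\str(H')|$, contradicting precisely the chain of equalities you derived (so this half you essentially already have). If instead removing $w$ deletes an edge, i.e.\ there are $s'_a,s'_b\in\str(H')$ with $s'_a\cap s'_b=\{w\}$, then at least one of them, say $s'_b$, must contain the new vertex, and moreover $s'_b$ minus the new vertex cannot be a maximal star of $H$ -- otherwise the same edge-loss witness for $w$ would already occur in $H$, contradicting $w$'s non-criticality there. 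Hence $s'_b$ is not the extension of any maximal star of $H$, i.e.\ it lies outside the image of your injection $R\to X$, forcing $|\str(H')|>|\str(H)|$ and contradicting equality. This edge-loss analysis is exactly the ingredient missing from your write-up; once it is in place, no isomorphism between $\K{S}(H')$ and $\K{S}(H'\setminus w)$ ever needs to be exhibited. (Note that the paper reads ``star-critical'' through these witnesses -- absorption or edge deletion -- just as in its Theorem 1; your scruple that a star graph might survive such a witness up to isomorphism is legitimate, but the witness-based case analysis sidesteps it, whereas your plan runs head-on into it.)
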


\begin{proof}
    Since $H$ is a proper induced subgraph of $H'$, let $y$ be the vertex in $V(H') \setminus V(H)$.
    First, note that $|\str(H')| \geq |\str(H)|$ since every maximal induced star of $H$ either remains maximal in $H'$ or is extended to a different maximal induced star of $H'$ by the addition of $y$.
    If $y$ is not a simplicial vertex, at least one star centered at $y$ is lost from $\str(H')$ to $\str(H)$, so condition 2 holds; if $y$ is non-star-critical or if there is some other vertex $x \in V(H)$ that is non-star-critical in $H'$, condition 1 holds.
    So now we may safely assume that $y$ is a star-critical simplicial vertex.
    Suppose that the statement is false, i.e. that every vertex of $H'$ is star-critical and $|\str(H')| = |\str(H)|$; in particular, vertex $x \in V(H)$, which is non-star-critical on $H$, is star-critical in $H'$.
    Before proceeding, note that if $yx \in E(H')$, at least one of $y$ or $x$ must be the center of a star containing this edge and, therefore, we have a new star in $H'$ and condition 2 is satisfied.
    For the remainder of the proof, let $H'' = H \setminus \{x\}$ and $H^* = H'' \cup \{y\}$.
    We divide our analysis in the two cases that make $x$ star-critical in $H'$.
    \begin{enumerate}
        \item Suppose that the removal of $x$ from $H'$ causes the absorption of $s'_a \in \str(H')$ by some $s^*_a \in \str(H^*)$, that is, $(s'_a \setminus \{x\}) \notin \str(H^*)$; this implies that $|\str(H^*)| < |\str(H')|$.
        By the assumption that $|\str(H)| = |\str(H')|$, there is some $s_a \in \str(H)$ satisfying $s_a \subseteq s'_a$, otherwise $s'_a$ would be a new star generated by the addition of $y$.
        Moreover, $(s_a \setminus \{x\}) \in \str(H'')$, since $x$ is non-star-critical in $H$, and $|\str(H'')| = |\str(H)| = |\str(H')|$.
        However, $|\str(H'')| \leq |\str(H^*)| < |\str(H')|$, a contradiction.
        For a clearer view of the double counting involved in this part of the proof, please refer to Figure~\ref{fig:mono}.
        \item There are two stars $s'_a, s'_b \in \str(H')$ such that $s_a' \cap s_b' = \{x\}$.
        Note that, at least one of $s'_a$ and $s'_b$ contains $y$, say $s'_b$, and we have that $(s'_b \setminus \{y\}) \notin \str(H)$, otherwise $s'_a,s'_b \in \str(H)$ and $x$ would be star-critical in $H$.
        Therefore, $s'_b$ is absorbed after the removal of $y$, implying $|\str(H')| > |\str(H)|$.
    \end{enumerate}
\end{proof}

\begin{figure}[!htb]
        \centering
        \begin{tikzpicture}[rotate=0]
                %\draw[help lines] (-5,-5) grid (5,5);
            \GraphInit[unit=3,vstyle=Normal]
            \SetVertexNormal[Shape=circle, FillColor = white, MinSize=3pt]
            \tikzset{VertexStyle/.append style = {inner sep = \inners, outer sep = \outers}}
            \Vertex[x=6, y=0, Math, L={H''}]{a}
            \Vertex[x=3, y=1, Math, L={H^*}]{b}
            \Vertex[x=3, y=-1, Math, L={H}]{c}
            \Vertex[x=0, y=0, Math, L={H'}]{d}
            \Edge[style={<-}, label=$- y$](a)(b)
            \Edge[style={<-, double}, label=$- x$](a)(c)
            \Edge[style={<-, dashed}, label=$- x$](b)(d)
            \Edge[style={<-, double}, label=$- y$](c)(d)
        \end{tikzpicture}
        \caption{Relationship between the graphs used in the first case of Theorem~\ref{thm:monotonicity}. The dashed arc indicates that at least one star was absorbed and thick arcs that no stars were absorbed.\label{fig:mono}}
\end{figure}
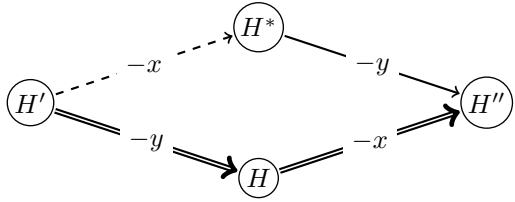

To the best of our knowledge, analogous results to Theorem~\ref{thm:monotonicity} are not known for clique or biclique graphs.
These types of monotonicity properties are particularly useful when looking for small examples; the following statement is a direct corollary.

\begin{corollary}
    \label{cor:frontier}
    Let $G$ be a $k$-vertex graph and $\mathcal{H}_n(k)$ be the set of all graphs on $n$ vertices that have $k$ maximal stars.
    If $G$ is not isomorphic to the star graph of any star-critical $H \in \mathcal{H}_r(k)$ for any $r < n$, and every $H \in \mathcal{H}_n(k)$ is non-star-critical, then $G$ is not a star graph.
\end{corollary}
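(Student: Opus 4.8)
The plan is to argue by contradiction, reducing to a star-critical pre-image and then using Theorem~\ref{thm:monotonicity} as an \emph{upward} induction on the number of vertices. First I would observe that every star graph admits a star-critical pre-image: starting from any $H_0$ with $\K{S}(H_0)\cong G$, I delete non-star-critical vertices one at a time, each deletion leaving $\K{S}$ (and hence $|\str|=|V(G)|=k$) unchanged, until a star-critical graph $H$ remains. Thus $H\in\mathcal{H}_m(k)$ for $m=|V(H)|$. Hypothesis~(a) forbids a star-critical pre-image of $G$ on fewer than $n$ vertices, so $m\geq n$; hypothesis~(b) says every member of $\mathcal{H}_n(k)$ is non-star-critical, so $m\neq n$. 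Hence $m\geq n+1$.

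Next I would prove the key claim that, for every $r\geq n$, every member of $\mathcal{H}_r(k)$ is non-star-critical; applied to $H\in\mathcal{H}_m(k)$ with $m\geq n$ this contradicts $H$ being star-critical and finishes the proof. The base case $r=n$ is exactly hypothesis~(b). For the inductive step, let $F\in\mathcal{H}_{r+1}(k)$ and suppose, toward a contradiction, that $F$ is star-critical. I would locate a vertex $y$ whose deletion preserves the number of maximal stars, i.e. $|\str(F\setminus y)|=k$, so that $F\setminus y\in\mathcal{H}_r(k)$ and is therefore non-star-critical by the induction hypothesis, exhibiting a non-star-critical vertex in $F\setminus y$. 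Applying Theorem~\ref{thm:monotonicity} to the pair $F\setminus y\subseteq F$, the alternative $|\str(F)|\geq|\str(F\setminus y)|+1=k+1$ is impossible because $|\str(F)|=k$; hence the other alternative must hold and $F$ has a non-star-critical vertex, contradicting star-criticality.

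The hard part will be guaranteeing the count-preserving vertex $y$ inside a star-critical $F$. Here I would lean on the partition from the proof of Theorem~\ref{thm:bound}: writing $V(F)=K\cup I_A\cup I_E$, every $y\in I_E$ has the desired property, since its removal deletes an edge of the star graph while destroying no maximal star, whence $|\str(F\setminus y)|=|\str(F)|$. The genuinely delicate case is $I_E=\emptyset$, where every vertex is either a center or forces an absorption, so each single deletion strictly lowers the star count. I would attack this case by passing to an induced subgraph of $H$ of minimum order still carrying exactly $k$ maximal stars; such a subgraph is automatically star-critical, and the remaining task---showing that it cannot live strictly above $n$ vertices, so that it falls into $\mathcal{H}_n(k)$ and contradicts~(b)---is precisely the monotonicity phenomenon the corollary rests on, and the step I expect to require the most care.
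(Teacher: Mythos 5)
Your reduction to a star-critical pre-image $H^*$ with $|\str(H^*)|=k$, the forcing of $|V(H^*)|\geq n+1$ from the two hypotheses, and the upward induction scheme are the natural way to make the paper's claim precise (the paper writes no proof at all, merely asserting the statement is a ``direct corollary'' of Theorem~\ref{thm:monotonicity}), and your handling of the $I_E\neq\emptyset$ case is sound. But the case you flag as delicate is a genuine gap, and your proposed attack on it is circular. Since the number of maximal stars never decreases when a vertex is added (the first observation in the proof of Theorem~\ref{thm:monotonicity}), a star-critical $F\in\mathcal{H}_{r+1}(k)$ in which every one-vertex deletion strictly lowers the star count is \emph{itself} the minimum-order induced subgraph of $F$ carrying $k$ maximal stars; so ``passing to'' that subgraph returns $F$, and the ``remaining task'' you defer is verbatim the claim under proof. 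Theorem~\ref{thm:monotonicity} cannot help in this case: even when $F\setminus y$ is known to have a non-star-critical vertex, the theorem's second alternative, $|\str(F)|\geq|\str(F\setminus y)|+1$, is exactly what holds when the deletion loses stars, so the disjunction is satisfied and no contradiction with the star-criticality of $F$ can be extracted.

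Moreover, this case is realized by concrete graphs and in fact defeats the statement as literally written, so no blind proof could have closed the gap. Every complete graph $K_p$ with $p\geq 3$ is star-critical, its maximal stars are precisely its $\binom{p}{2}$ edges, and deleting any vertex leaves only $\binom{p-1}{2}$ of them, i.e.\ every deletion drops the count. Take $G=\K{S}(K_4)\cong K_{2,2,2}$ (the graph of Figure~\ref{fig:star_not_square}), so $k=6$, and take $n=3$: a graph on at most three vertices has at most three maximal stars, hence $\mathcal{H}_r(6)=\emptyset$ for every $r\leq 3$, and both hypotheses of Corollary~\ref{cor:frontier} hold vacuously --- yet $G$ is a star graph. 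Any correct version of the corollary therefore needs hypotheses beyond those stated (for instance, nonemptiness or coverage of the lower levels, or control over pre-images with \emph{fewer} than $k$ maximal stars, since star counts can jump under vertex deletion), and the failure point of the paper's omitted ``direct'' derivation is exactly the case your proposal leaves open.
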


The above results allowed us to implement a procedure using McKay's Nauty package~\cite{nauty}.
Instead of only looking for the square of the net graph, we generated every star graph on $k \leq 8$ vertices.
In fact, for each $k$, no graph in $\mathcal{H}_{2k+1}(k)$ was star-critical.
Figures~\ref{fig:four_star}, \ref{fig:five_star}, and \ref{fig:six_star} present every star graph on four, five and six vertices, respectively.
There are 46 star graphs on seven vertices, and 201 star graphs on eight vertices.
Let $\mathcal{H}^*(k)$ denote the set of all star-critical pre-images for star graphs on $k$ vertices.
Our procedure also listed $\mathcal{H}^*(k)$ for every $k \leq 8$.
In particular, there are 190 graphs in $\mathcal{H}^*(4)$, 1056 in $\mathcal{H}^*(5)$, 8876 in $\mathcal{H}^*(6)$, 76320 in $\mathcal{H}^*(7)$, and 892170 in $\mathcal{H}^*(8)$.

\begin{figure}
    \centering
    \begin{subfigure}{.5\textwidth}
      \centering
      \begin{tikzpicture}
\GraphInit[unit=3,vstyle=Normal]
\SetVertexNormal[Shape=circle, FillColor = black, MinSize=3pt]
\tikzset{VertexStyle/.append style = {inner sep = \inners,outer sep = \outers}}
\SetVertexNoLabel
\grCycle[RA=1]{4}
\Edge(a0)(a2)
\end{tikzpicture}
    \end{subfigure}%
    \begin{subfigure}{.5\textwidth}
      \centering
      \begin{tikzpicture}
\GraphInit[unit=3,vstyle=Normal]
\SetVertexNormal[Shape=circle, FillColor = black, MinSize=3pt]
\tikzset{VertexStyle/.append style = {inner sep = \inners,outer sep = \outers}}
\SetVertexNoLabel
\grComplete[RA=1]{4}
\end{tikzpicture}
    \end{subfigure}
    \caption{The two four-vertex star graphs. \label{fig:four_star}}
\end{figure}
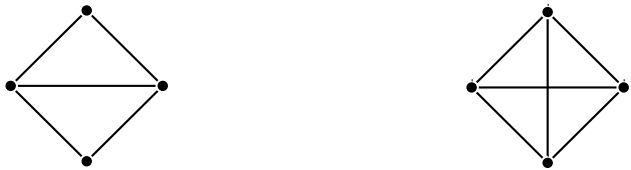

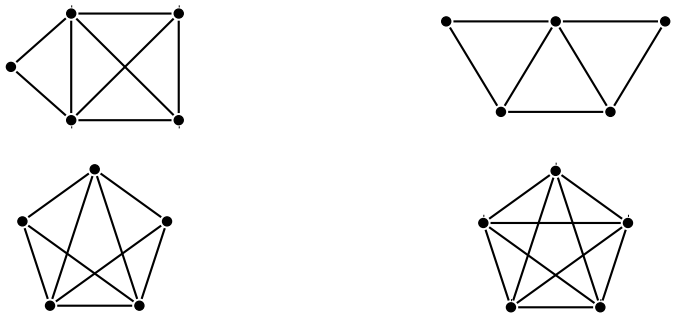
\begin{figure}
    \centering
    \begin{subfigure}{.5\textwidth}
      \centering
      \begin{tikzpicture}[rotate=135]
\GraphInit[unit=3,vstyle=Normal]
\SetVertexNormal[Shape=circle, FillColor = black, MinSize=3pt]
\tikzset{VertexStyle/.append style = {inner sep = \inners,outer sep = \outers}}
\SetVertexNoLabel
\grComplete[RA=1]{4}
\Vertex[a=45, d=1.5]{v}
\Edge(v)(a0)
\Edge(v)(a1)
\end{tikzpicture}
    \end{subfigure}%
    \begin{subfigure}{.5\textwidth}
      \centering
      \begin{tikzpicture}
\GraphInit[unit=3,vstyle=Normal]
\SetVertexNormal[Shape=circle, FillColor = black, MinSize=3pt]
\tikzset{VertexStyle/.append style = {inner sep = \inners,outer sep = \outers}}
\SetVertexNoLabel
\grPath[RA=1.44, RS=1.2, prefix=a]{3}
\begin{scope}[xshift=0.72cm]
    \grPath[RA=1.44, RS=0, prefix=b]{2}
\end{scope}
\Edges(a0,b0,a1,b1,a2)
\end{tikzpicture}
    \end{subfigure}
    \par\bigskip
    \begin{subfigure}{.5\textwidth}
      \centering
      \begin{tikzpicture}[rotate=90]
\GraphInit[unit=3,vstyle=Normal]
\SetVertexNormal[Shape=circle, FillColor = black, MinSize=3pt]
\tikzset{VertexStyle/.append style = {inner sep = \inners,outer sep = \outers}}
\SetVertexNoLabel
\grCycle[RA=1]{5}
\Edges(a4,a2,a0,a3,a1)
\end{tikzpicture}
    \end{subfigure}%
    \begin{subfigure}{.5\textwidth}
      \centering
      \begin{tikzpicture}[rotate=90]
\GraphInit[unit=3,vstyle=Normal]
\SetVertexNormal[Shape=circle, FillColor = black, MinSize=3pt]
\tikzset{VertexStyle/.append style = {inner sep = \inners,outer sep = \outers}}
\SetVertexNoLabel
\grComplete[RA=1]{5}
\end{tikzpicture}
    \end{subfigure}
    \caption{The four five-vertex star graphs.\label{fig:five_star}}
\end{figure}

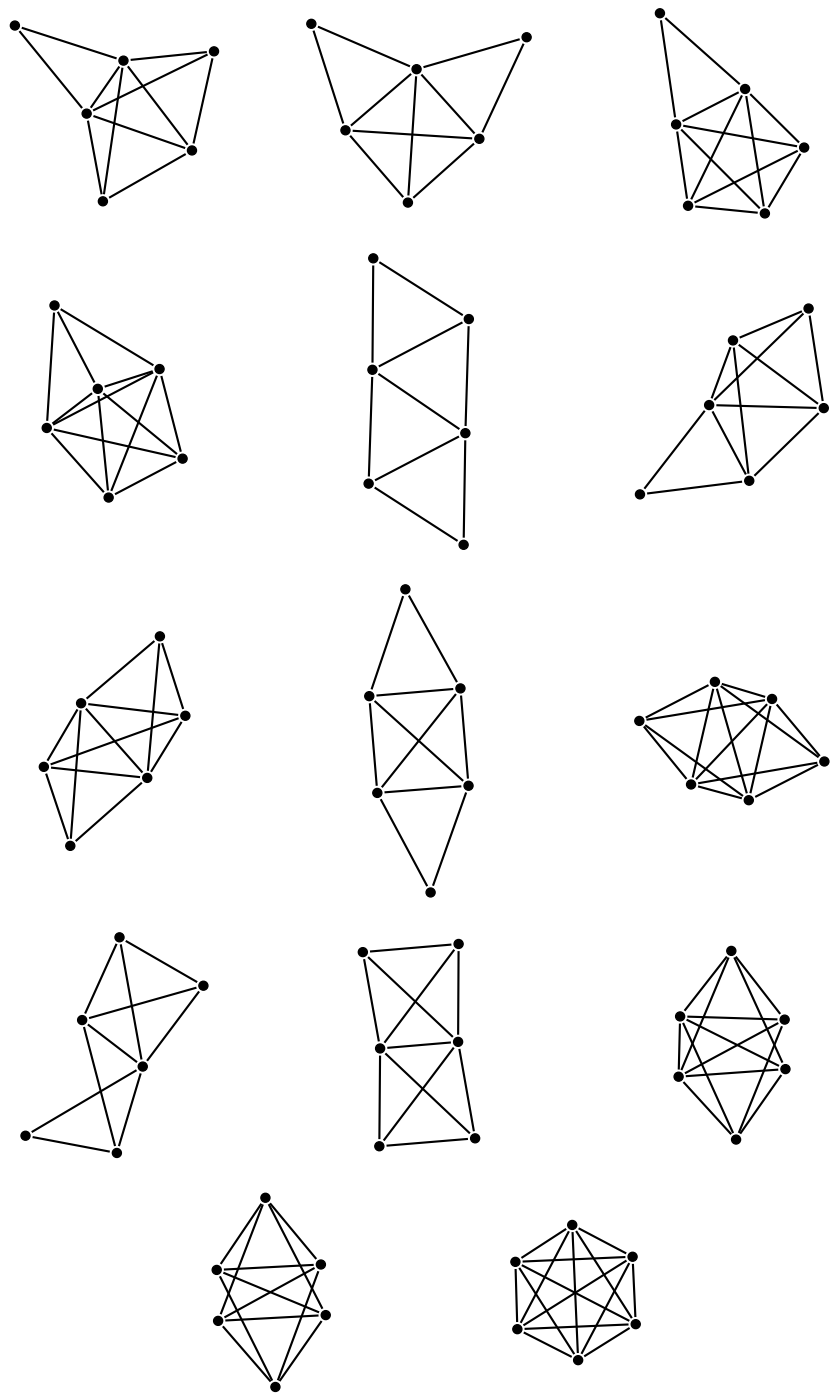
\begin{figure}
    \centering
    \begin{subfigure}{.33\textwidth}
      \centering
      \begin{tikzpicture}
\GraphInit[unit=3,vstyle=Normal]
\SetVertexNormal[Shape=circle, FillColor = black, MinSize=3pt]
\tikzset{VertexStyle/.append style = {inner sep = \inners,outer sep = \outers}}
\SetVertexNoLabel
\Vertex[x = 2.365, y = 0.71044]{v1}
\Vertex[x = 1.19352, y = 0.036]{v0}
\Vertex[x = 0.97988, y = 1.19874]{v3}
\Vertex[x = 2.655, y = 2.0248]{v2}
\Vertex[x = 0.036, y = 2.367]{v5}
\Vertex[x = 1.4641, y = 1.90164]{v4}
\Edge(v0)(v3)
\Edge(v4)(v5)
\Edge(v2)(v3)
\Edge(v3)(v5)
\Edge(v0)(v4)
\Edge(v1)(v4)
\Edge(v1)(v2)
\Edge(v3)(v4)
\Edge(v0)(v1)
\Edge(v1)(v3)
\Edge(v2)(v4)
\end{tikzpicture}
    \end{subfigure}%
    \begin{subfigure}{.33\textwidth}
      \centering
      \begin{tikzpicture}
\GraphInit[unit=3,vstyle=Normal]
\SetVertexNormal[Shape=circle, FillColor = black, MinSize=3pt]
\tikzset{VertexStyle/.append style = {inner sep = \inners,outer sep = \outers}}
\SetVertexNoLabel
\Vertex[x = 2.2458, y = 0.88158]{v1}
\Vertex[x = 1.30628, y = 0.036]{v0}
\Vertex[x = 2.8674, y = 2.2288]{v3}
\Vertex[x = 0.48494, y = 0.9946]{v2}
\Vertex[x = 1.42008, y = 1.80488]{v5}
\Vertex[x = 0.036, y = 2.4068]{v4}
\Edge(v4)(v5)
\Edge(v0)(v2)
\Edge(v0)(v5)
\Edge(v3)(v5)
\Edge(v1)(v2)
\Edge(v1)(v5)
\Edge(v0)(v1)
\Edge(v2)(v5)
\Edge(v1)(v3)
\Edge(v2)(v4)
\end{tikzpicture}
    \end{subfigure}
    \begin{subfigure}{.33\textwidth}
      \centering
      \begin{tikzpicture}
\GraphInit[unit=3,vstyle=Normal]
\SetVertexNormal[Shape=circle, FillColor = black, MinSize=3pt]
\tikzset{VertexStyle/.append style = {inner sep = \inners,outer sep = \outers}}
\SetVertexNoLabel
\Vertex[x = 1.93282, y = 0.90976]{v1}
\Vertex[x = 0.40616, y = 0.137682]{v0}
\Vertex[x = 1.15636, y = 1.68636]{v3}
\Vertex[x = 1.4162, y = 0.036]{v2}
\Vertex[x = 0.036, y = 2.6908]{v5}
\Vertex[x = 0.2489, y = 1.21628]{v4}
\Edge(v0)(v3)
\Edge(v2)(v3)
\Edge(v0)(v2)
\Edge(v4)(v5)
\Edge(v3)(v5)
\Edge(v1)(v3)
\Edge(v0)(v4)
\Edge(v1)(v4)
\Edge(v3)(v4)
\Edge(v0)(v1)
\Edge(v1)(v2)
\Edge(v2)(v4)
\end{tikzpicture}
    \end{subfigure}
    \par\bigskip
    \begin{subfigure}{.33\textwidth}
      \centering
      \begin{tikzpicture}
\GraphInit[unit=3,vstyle=Normal]
\SetVertexNormal[Shape=circle, FillColor = black, MinSize=3pt]
\tikzset{VertexStyle/.append style = {inner sep = \inners,outer sep = \outers}}
\SetVertexNoLabel
\Vertex[x = 1.82262, y = 0.55208]{v1}
\Vertex[x = 0.036, y = 0.95892]{v0}
\Vertex[x = 0.138028, y = 2.5836]{v3}
\Vertex[x = 0.84994, y = 0.036]{v2}
\Vertex[x = 0.70724, y = 1.4778]{v5}
\Vertex[x = 1.5197, y = 1.74048]{v4}
\Edge(v0)(v3)
\Edge(v4)(v5)
\Edge(v0)(v2)
\Edge(v0)(v5)
\Edge(v3)(v5)
\Edge(v0)(v4)
\Edge(v1)(v4)
\Edge(v3)(v4)
\Edge(v1)(v5)
\Edge(v0)(v1)
\Edge(v2)(v5)
\Edge(v1)(v2)
\Edge(v2)(v4)
\end{tikzpicture}
    \end{subfigure}%
    \begin{subfigure}{.33\textwidth}
      \centering
      \begin{tikzpicture}
\GraphInit[unit=3,vstyle=Normal]
\SetVertexNormal[Shape=circle, FillColor = black, MinSize=3pt]
\tikzset{VertexStyle/.append style = {inner sep = \inners,outer sep = \outers}}
\SetVertexNoLabel
\Vertex[x = 1.30832, y = 1.51662]{v1}
\Vertex[x = 1.28532, y = 0.036]{v0}
\Vertex[x = 0.036, y = 0.84724]{v3}
\Vertex[x = 1.3539, y = 3.0298]{v2}
\Vertex[x = 0.097124, y = 3.835]{v5}
\Vertex[x = 0.08617, y = 2.357]{v4}
\Edge(v0)(v3)
\Edge(v4)(v5)
\Edge(v1)(v4)
\Edge(v1)(v2)
\Edge(v3)(v4)
\Edge(v0)(v1)
\Edge(v2)(v5)
\Edge(v1)(v3)
\Edge(v2)(v4)
\end{tikzpicture}
    \end{subfigure}
    \begin{subfigure}{.33\textwidth}
      \centering
      \begin{tikzpicture}
\GraphInit[unit=3,vstyle=Normal]
\SetVertexNormal[Shape=circle, FillColor = black, MinSize=3pt]
\tikzset{VertexStyle/.append style = {inner sep = \inners,outer sep = \outers}}
\SetVertexNoLabel
\Vertex[x = 2.4532, y = 1.17998]{v1}
\Vertex[x = 1.25934, y = 2.0774]{v0}
\Vertex[x = 2.2534, y = 2.5008]{v3}
\Vertex[x = 1.47314, y = 0.2163]{v2}
\Vertex[x = 0.94582, y = 1.22042]{v5}
\Vertex[x = 0.036, y = 0.036]{v4}
\Edge(v0)(v3)
\Edge(v4)(v5)
\Edge(v0)(v2)
\Edge(v0)(v5)
\Edge(v1)(v3)
\Edge(v1)(v5)
\Edge(v0)(v1)
\Edge(v2)(v5)
\Edge(v1)(v2)
\Edge(v3)(v5)
\Edge(v2)(v4)
\end{tikzpicture}
    \end{subfigure}
    \par\bigskip
    \begin{subfigure}{.33\textwidth}
      \centering
      \begin{tikzpicture}
\GraphInit[unit=3,vstyle=Normal]
\SetVertexNormal[Shape=circle, FillColor = black, MinSize=3pt]
\tikzset{VertexStyle/.append style = {inner sep = \inners,outer sep = \outers}}
\SetVertexNoLabel
\Vertex[x = 1.89708, y = 1.76094]{v1}
\Vertex[x = 0.036, y = 1.08372]{v0}
\Vertex[x = 0.38382, y = 0.036]{v3}
\Vertex[x = 1.56226, y = 2.8144]{v2}
\Vertex[x = 1.39586, y = 0.93704]{v5}
\Vertex[x = 0.526, y = 1.92696]{v4}
\Edge(v0)(v3)
\Edge(v4)(v5)
\Edge(v0)(v5)
\Edge(v3)(v5)
\Edge(v0)(v4)
\Edge(v1)(v4)
\Edge(v3)(v4)
\Edge(v1)(v5)
\Edge(v0)(v1)
\Edge(v2)(v5)
\Edge(v1)(v2)
\Edge(v2)(v4)
\end{tikzpicture}
    \end{subfigure}%
    \begin{subfigure}{.33\textwidth}
      \centering
      \begin{tikzpicture}
\GraphInit[unit=3,vstyle=Normal]
\SetVertexNormal[Shape=circle, FillColor = black, MinSize=3pt]
\tikzset{VertexStyle/.append style = {inner sep = \inners,outer sep = \outers}}
\SetVertexNoLabel
\Vertex[x = 1.34148, y = 1.45118]{v1}
\Vertex[x = 0.8426, y = 0.036]{v0}
\Vertex[x = 0.036, y = 2.6392]{v3}
\Vertex[x = 0.141848, y = 1.35614]{v2}
\Vertex[x = 0.51116, y = 4.056]{v5}
\Vertex[x = 1.23518, y = 2.742]{v4}
\Edge(v2)(v3)
\Edge(v0)(v2)
\Edge(v4)(v5)
\Edge(v3)(v5)
\Edge(v1)(v3)
\Edge(v1)(v4)
\Edge(v3)(v4)
\Edge(v0)(v1)
\Edge(v1)(v2)
\Edge(v2)(v4)
\end{tikzpicture}
    \end{subfigure}
    \begin{subfigure}{.33\textwidth}
      \centering
      \begin{tikzpicture}
\GraphInit[unit=3,vstyle=Normal]
\SetVertexNormal[Shape=circle, FillColor = black, MinSize=3pt]
\tikzset{VertexStyle/.append style = {inner sep = \inners,outer sep = \outers}}
\SetVertexNoLabel
\Vertex[x = 1.7815, y = 1.37864]{v1}
\Vertex[x = 0.71638, y = 0.24446]{v0}
\Vertex[x = 1.02986, y = 1.60574]{v3}
\Vertex[x = 1.4757, y = 0.036]{v2}
\Vertex[x = 0.036, y = 1.08744]{v5}
\Vertex[x = 2.4698, y = 0.54924]{v4}
\Edge(v0)(v3)
\Edge(v2)(v3)
\Edge(v0)(v2)
\Edge(v0)(v5)
\Edge(v1)(v3)
\Edge(v0)(v4)
\Edge(v1)(v4)
\Edge(v3)(v4)
\Edge(v1)(v5)
\Edge(v0)(v1)
\Edge(v2)(v5)
\Edge(v1)(v2)
\Edge(v3)(v5)
\Edge(v2)(v4)
\end{tikzpicture}
    \end{subfigure}
    \par\bigskip
    \begin{subfigure}{.33\textwidth}
      \centering
      \begin{tikzpicture}
\GraphInit[unit=3,vstyle=Normal]
\SetVertexNormal[Shape=circle, FillColor = black, MinSize=3pt]
\tikzset{VertexStyle/.append style = {inner sep = \inners,outer sep = \outers}}
\SetVertexNoLabel
\Vertex[x = 1.2723, y = 2.8958]{v1}
\Vertex[x = 0.036, y = 0.2646]{v0}
\Vertex[x = 1.57788, y = 1.18194]{v3}
\Vertex[x = 2.3758, y = 2.254]{v2}
\Vertex[x = 0.78102, y = 1.79834]{v5}
\Vertex[x = 1.23758, y = 0.036]{v4}
\Edge(v0)(v3)
\Edge(v4)(v5)
\Edge(v2)(v3)
\Edge(v1)(v3)
\Edge(v0)(v4)
\Edge(v1)(v2)
\Edge(v1)(v5)
\Edge(v3)(v4)
\Edge(v2)(v5)
\Edge(v3)(v5)
\end{tikzpicture}
    \end{subfigure}%
    \begin{subfigure}{.33\textwidth}
      \centering
      \begin{tikzpicture}
\GraphInit[unit=3,vstyle=Normal]
\SetVertexNormal[Shape=circle, FillColor = black, MinSize=3pt]
\tikzset{VertexStyle/.append style = {inner sep = \inners,outer sep = \outers}}
\SetVertexNoLabel
\Vertex[x = 1.51282, y = 0.14127]{v1}
\Vertex[x = 0.25296, y = 0.036]{v0}
\Vertex[x = 1.29558, y = 2.7186]{v3}
\Vertex[x = 0.26328, y = 1.33302]{v2}
\Vertex[x = 1.28896, y = 1.42044]{v5}
\Vertex[x = 0.036, y = 2.6104]{v4}
\Edge(v2)(v3)
\Edge(v0)(v2)
\Edge(v4)(v5)
\Edge(v0)(v5)
\Edge(v3)(v5)
\Edge(v3)(v4)
\Edge(v1)(v5)
\Edge(v0)(v1)
\Edge(v2)(v5)
\Edge(v1)(v2)
\Edge(v2)(v4)
\end{tikzpicture}
    \end{subfigure}
    \begin{subfigure}{.33\textwidth}
      \centering
      \begin{tikzpicture}
\GraphInit[unit=3,vstyle=Normal]
\SetVertexNormal[Shape=circle, FillColor = black, MinSize=3pt]
\tikzset{VertexStyle/.append style = {inner sep = \inners,outer sep = \outers}}
\SetVertexNoLabel
\Vertex[x = 1.44128, y = 0.97014]{v1}
\Vertex[x = 0.036, y = 0.86946]{v0}
\Vertex[x = 0.72938, y = 2.5378]{v3}
\Vertex[x = 0.7919, y = 0.036]{v2}
\Vertex[x = 0.056102, y = 1.6691]{v5}
\Vertex[x = 1.43066, y = 1.62654]{v4}
\Edge(v0)(v3)
\Edge(v4)(v5)
\Edge(v0)(v2)
\Edge(v0)(v5)
\Edge(v1)(v3)
\Edge(v0)(v4)
\Edge(v3)(v4)
\Edge(v1)(v5)
\Edge(v0)(v1)
\Edge(v2)(v5)
\Edge(v1)(v2)
\Edge(v3)(v5)
\Edge(v2)(v4)
\end{tikzpicture}
    \end{subfigure}
    \par\bigskip
    \begin{subfigure}{.33\textwidth}
      \centering
      \begin{tikzpicture}
\GraphInit[unit=3,vstyle=Normal]
\SetVertexNormal[Shape=circle, FillColor = black, MinSize=3pt]
\tikzset{VertexStyle/.append style = {inner sep = \inners,outer sep = \outers}}
\SetVertexNoLabel
\Vertex[x = 1.46994, y = 0.989]{v1}
\Vertex[x = 0.054778, y = 0.91242]{v0}
\Vertex[x = 0.67678, y = 2.545]{v3}
\Vertex[x = 0.80808, y = 0.036]{v2}
\Vertex[x = 1.40618, y = 1.66]{v5}
\Vertex[x = 0.036, y = 1.58814]{v4}
\Edge(v0)(v3)
\Edge(v4)(v5)
\Edge(v0)(v2)
\Edge(v0)(v5)
\Edge(v1)(v3)
\Edge(v1)(v4)
\Edge(v3)(v4)
\Edge(v0)(v1)
\Edge(v2)(v5)
\Edge(v1)(v2)
\Edge(v3)(v5)
\Edge(v2)(v4)
\end{tikzpicture}
    \end{subfigure}%
    \begin{subfigure}{.33\textwidth}
      \centering
      \begin{tikzpicture}
\GraphInit[unit=3,vstyle=Normal]
\SetVertexNormal[Shape=circle, FillColor = black, MinSize=3pt]
\tikzset{VertexStyle/.append style = {inner sep = \inners,outer sep = \outers}}
\SetVertexNoLabel
\Vertex[x = 1.61832, y = 0.51324]{v1}
\Vertex[x = 0.06159, y = 0.44664]{v0}
\Vertex[x = 0.7835, y = 1.82894]{v3}
\Vertex[x = 0.86098, y = 0.036]{v2}
\Vertex[x = 0.036, y = 1.3396]{v5}
\Vertex[x = 1.57708, y = 1.4073]{v4}
\Edge(v0)(v3)
\Edge(v2)(v3)
\Edge(v0)(v2)
\Edge(v4)(v5)
\Edge(v0)(v5)
\Edge(v1)(v3)
\Edge(v0)(v4)
\Edge(v1)(v4)
\Edge(v3)(v4)
\Edge(v1)(v5)
\Edge(v0)(v1)
\Edge(v2)(v5)
\Edge(v1)(v2)
\Edge(v3)(v5)
\Edge(v2)(v4)
\end{tikzpicture}
    \end{subfigure}
    \caption{The fourteen six-vertex star graphs.\label{fig:six_star}}
\end{figure}
\section{Concluding Remarks}

This work introduced the class of star graphs -- the intersection graphs of the induced maximal stars of some graph.
We presented various results, such as a Krausz-type characterization for the class, a quadratic bound on the size of potential pre-images, membership of the recognition problem in \textsf{NP} and a monotonicity theorem for graphs which are not star-critical.
We also presented a series of properties the members of the class must satisfy, such as being biconnected and that every edge must belong to some triangle.
We leave two main open questions.
The first, and perhaps more challenging of the two, is the complexity of the recognition problem; for example, the complexity of the clique graph recognition problem was left open for many decades, only being settled recently~\cite{clique_recognition} through a series of non-intuitive gadgets and other novel characterizations.
The second is a complete characterization of both star-critical and non-star-critical vertices; in particular, non-star-critical vertices seem the biggest obstacle one must overcome to achieve a linear bound on the size of star-critical pre-images.

Despite our special interests in the above questions, many other directions are available for investigation.
In terms of the class of all star graphs, our best membership checking tool at the moment is generating pre-image candidates and apply Corollary~\ref{cor:frontier} of Theorem~\ref{thm:monotonicity} to prune the search space; if our hypothesis that the recognition problem is \textsf{NP-hard} is indeed true, and thus unlikely solvable in polynomial time, then what is the best way to verify membership?
In a more general context, is there a polynomial delay algorithm that generates all star graphs of a certain order?
We have also only begun the study of the iterated star operator, and various inquiries can be made about its properties, such as convergence/divergence criteria or other structural parameters, like maximum/minimum degree and connectivity.
A strongly related but significantly different open topic is that of edge-star graphs, i.e., the edge-intersection graph of the maximal stars.
Edge-biclique graphs have very recently been studied by Legay and Montero~\cite{edge_biclique} and present significant differences from the vertex-intersection biclique graph of Groshaus et al.~\cite{biclique_graph};
Perhaps the interplay between the edge-star and edge-biclique graphs can yield useful observations for both classes.
\section*{Acknowledgements} We thank the research agencies CAPES, CONICET, CNPq, and FAPEMIG for partially funding this work.

%\section*{References}
\bibliographystyle{splncs03}
\bibliography{main}
\end{document}